\documentclass{amsart}

\usepackage{hyperref}
\hypersetup{
    colorlinks=true,
    linkcolor=blue,
    citecolor=magenta,      
    urlcolor=cyan,
    }

\usepackage{amssymb}
\usepackage{tikz}
\usetikzlibrary{automata, positioning, arrows}
\usepackage{todonotes}
\usepackage{listings}
\lstset{breaklines=true, basicstyle=\tiny} 

\theoremstyle{plain}
\newtheorem{theorem}{Theorem}[section]
\newtheorem{corollary}[theorem]{Corollary}
\newtheorem{lemma}[theorem]{Lemma}
\newtheorem{proposition}[theorem]{Proposition}

\theoremstyle{definition}

\newtheorem{example}[theorem]{Example}

\theoremstyle{remark}
\newtheorem{remark}[theorem]{Remark}
\newtheorem{question}[theorem]{Question}

%%% Hypergeometric pFq
\newmuskip\pFqmuskip
\newcommand*\pFq[6][8]{%
  \begingroup % only local assignments
  \pFqmuskip=#1mu\relax
  \mathchardef\normalcomma=\mathcode`,
  % make the comma math active
  \mathcode`\,=\string"8000
  % and define it to be \pFqcomma
  \begingroup\lccode`\~=`\,
  \lowercase{\endgroup\let~}\pFqcomma
  % typeset the formula
  {}_{#2}F_{#3}{\left[\genfrac..{0pt}{}{#4}{#5};#6\right]}%
  \endgroup
}
\newcommand{\pFqcomma}{{\normalcomma}\mskip\pFqmuskip}

\newcommand{\Z}{\mathbb{Z}}
\newcommand{\calS}{\mathcal{S}}
\newcommand{\bA}{\mathbf{A}}
\newcommand{\bB}{\mathbf{B}}
\newcommand{\bC}{\mathbf{C}}
\newcommand{\bD}{\mathbf{D}}
\newcommand{\bE}{\mathbf{E}}
\newcommand{\bF}{\mathbf{F}}
\newcommand{\bX}{\mathbf{X}}
\newcommand{\bY}{\mathbf{Y}}
\newcommand{\bL}{\mathbf{L}}

\begin{document}

\title[Walks ending on a coordinate hyperplane]{Lattice walks ending on a coordinate hyperplane avoiding backtracking and repeats}
% \title[short text for running head]{full title}

%    Only \author and \address are required; other information is
%    optional.  Remove any unused author tags.

\author{John Machacek}
%\address{}
%\curraddr{}
%\email{}
%\thanks{}

%    \subjclass is required.
\subjclass[2010]{Primary 05A15; Secondary 05A16, 33F10, 68Q45}
\keywords{Lattice walks, WZ theory, generating functions, formal languages, central binomial coefficients, Catalan numbers}

%\date{}

%\dedicatory{}

\begin{abstract}
We work with lattice walks in $\mathbb{Z}^{r+1}$ using step set $\{\pm 1\}^{r+1}$ that finish with $x_{r+1} = 0$. We further impose conditions of avoiding backtracking (i.e. $[v,-v]$) and avoiding consecutive steps (i.e. $[v,v]$) each possibly combined with restricting to the half-space $x_{r+1} \geq 0$. We find in all cases the generating functions for such walks are algebraic and give explicit formulas for them. We also find polynomial recurrences for their coefficients. 
From the generating functions we find the asymptotic enumeration of each family of walks considered.
The enumeration in special cases includes central binomial coefficients and Catalan numbers as well as relations to enumeration of another family of walks previously studied for which we provide a bijection.
\end{abstract}
\maketitle

\section{Introduction}
\label{sec:intro}
Central binomial coefficients and Catalan numbers are two fundamental sequences in enumerative combinatorics as well as in other areas of mathematics.
Both these numbers can be thought of as enumerating walks in $\mathbb{Z}$ with steps from $\{\pm 1\}$ that end at the origin.
The Catalan number model has the additional restriction the walks must stay in the nonnegative integers and correspond to the height of usual Dyck paths.
We generalize this situation by looking at walks in $\mathbb{Z}^{r+1}$ with steps from $\{\pm 1\}^{r+1}$.
The condition that the walks must end on the hyperplane defined by $x_{r+1} = 0$ is imposed.
For the generalized Catalan situation we also require the walks be confined to the halfspace with $x_{r+1} \geq 0$.
We then look at such walks avoiding the backtracking pattern $[v,-v]$ and the repeat pattern $[v,v]$ for $v \in \{\pm 1\}^{r+1}$.
Here a walk is a sequence of vectors from $\{\pm 1\}^{r+1}$ and $[v_1, v_1, \dots, v_{\ell}]$ contains the pattern $[u_1, u_2]$ if and only if $[v_i, v_{i+1}] = [u_1, u_2]$ for $1 \leq i < \ell$.
That is, our pattern avoidance means that no contiguous subsequence of a walk is equal to the pattern.
When $r=0$ there are no nonempty walks ending at the origin avoiding the backtracking pattern.
While for avoiding the repeat pattern there are the two walks of length $2n$ namely $[+1, -1]^n$ and $[-1,+1]^n$ only the first of which is relevant the Catalan situation.
However, for $r > 0$ there are many such walks.
Our aim is enumerating these walks.

Walks in integer lattices are a topic of interest in computer science, mathematics, physics, and statistics.
Lattice path combinatorics can be approached from various perspectives (see~\cite{Kratt} for survey with focus on enumeration).
Our focus is enumeration (i.e. finding recurrences, generating functions, and formulas).
One such enumeration problem involving avoidance of backtracking we solve gives an interpretation of sequence A085363 in the OEIS~\cite{OEIS} in terms of 2D lattices walks.
This sequence was an original motivation, but we found the techniques readily generalized to arbitrary dimension as well as avoidance of consecutive steps.
In addition to combinatorial arguments, we make use of formal language theory and its connection to generating functions.
We also use automated methods for dealing with hypergeometric terms and finding polynomial recurrences (see~\cite{AB} for more about these types of techniques).

Lattice path enumeration has a long and rich history, and recent attention has been given to avoiding patterns as we consider here.
We saw for Dyck paths (i.e. $r=0$) that avoiding consecutive steps or backtracking is not interesting.
However, the consecutive step patterns $[+1,+1]$ and $[-1,-1]$ are special cases of runs $[+1,+1, \dots, +1]$ and $[-1,-1,\dots,-1]$.
Study of Dyck paths avoiding runs of given lengths (as well as peaks and valleys avoiding certain heights) is conducted in~\cite{EZ} with automated computational methods and in~\cite{BD} with formal grammar techniques.
Avoiding a pattern in lattice paths was considered in~\cite{MR4058415} and later for multiple patterns in~\cite{AoA,MR4138722}.
These latter works make use of the so called vectorial kernel method which modifies the kernel method to allow for pattern avoidance.
While the vectorial kernel method is an efficient and flexible way to access the generating functions of our walks, we want to focus in this work on binomial expressions for the coefficients; so, for our set of jumps we present a way to do so via an alternative approach based on context-free grammars and ad-hoc recurrences.

The remainder of the paper is organized as follows.
In Section~\ref{sec:setting} we formally define the languages of walks we are interested in and show that their generating functions are algebraic (and hence the sequences of coefficients satisfy a recurrence with polynomial coefficients).
Formulas and recurrences are found in Section~\ref{sec:formula}, then generating functions and asymptotics are given in Section~\ref{sec:gen}.
Lastly, Section~\ref{sec:conclusion} concludes by looking at some open problems and related work.
Also, there is the~Appendix which provides Maple code used to prove some results from earlier in the article.

\subsection*{Acknowledgments} The author wishes to thank anonymous referees for their careful reading as well as for several thoughtful and helpful comments which have improved this paper.

\section{The setting}
\label{sec:setting}
For $r \geq 0$ we consider lattice walks in $\Z^{r+1}$ that use steps from $\calS_r := \{\pm 1\}^{r+1}$ which start at the origin and end with $x_{r+1} = 0$.
Let $\bA^{(r)}$ be the language over the alphabet $\calS_r$ consisting of such walks.
Evidently any such walk must consist of $2n$ steps for some $n \geq 0$.
Let $a^{(r)}_n$ be the number of such walks with $2n$ steps.

We are further interested in subsets of these walks with additional conditions.
We consider the language of walks $\bB^{(r)} \subseteq \bA^{(r)}$ which avoid backtracking.
That is, walks ending with $x_{r+1}=0$ with the additional constraint that a step $v$ cannot be directly followed by $-v$ for any $v \in \calS_d$.
We let $b^{(r)}_n$ be the number of walks avoiding backtracking with $2n$ steps.
Similarly, we let $\bC^{(r)} \subseteq \bA^{(r)}$ denote the language of walks avoiding consecutive repeats. 
That is,  walks avoiding a step $v$ directly followed by $v$ for any $v \in \calS_d$.
We let $c^{(r)}_n$ denote the number of walks avoiding consecutive repeats with $2n$ steps.

Now let $\bD^{(r)}$ be the language consisting of walks starting at the origin for which $x_{r+1} \geq 0$ at all times and finish with $x_{r+1} = 0$.
We similarly let $\bE^{(r)} \subseteq \bD^{(r)}$ and $\bF^{(r)} \subseteq \bD^{(r)}$ denote the sublanguages which avoid backtracking and consecutive equal steps respectively.
Again we let the sequences $d^{(r)}_n$, $e^{(r)}_n$, and $f^{(r)}_n$ count the number of walks of length $2n$ in each of the languages 
$\bD^{(r)}$, $\bE^{(r)}$, and $\bF^{(r)}$.
Letting $\bX^{(r)}$ denote the language of walks avoiding the backtracking pattern $[v,-v]$ and $\bY^{(r)}$ denote the language of walks avoiding consecutive repeats $[v,v]$ we have
\begin{align*}
    \bB^{(r)} &= \bA^{(r)} \cap \bX^{(r)} & \bE^{(r)} &= \bD^{(r)} \cap \bX^{(r)} \\
    \bC^{(r)} &= \bA^{(r)} \cap \bY^{(r)} & \bF^{(r)} &= \bD^{(r)} \cap \bY^{(r)} \\
\end{align*}
as expressions of our languages of interest.
It is worth noting that both $\bX^{(r)}$ and $\bY^{(r)}$ are regular languages.

\begin{remark}
Here are a few remarks about notation (some of which has already been used in the Section~\ref{sec:intro}).
We will use $(\;\cdot\;)$ to denote vectors (i.e. elements of our alphabet) and $[\;\cdot\;]$ to denote words (i.e. sequences of vectors).
For example, $(+1,-1,+1) \in \calS_2$ and $[(+1,-1,+1), (+1, +1, -1)]$ is a word over $\calS_2$.
We may also use $(v,+1)$ or $(v,-1)$ to denote a element of $\calS_{r+1}$ ending in $+1$ or $-1$ respectively for $v \in \calS_r.$
We will also use the power notation to denote repeated instances of a vector or word.
So if $v = (+1,-1)$ then $[v^3] = [(+1,-1), (+1,-1),(+1,-1)]$.
We may also put words to a power.
For example, if $w_1 = [(+1,+1), (-1,-1)]$ and $w_2 = [(+1,+1),(+1,+1)]$, then \[[w_1^2, w_2] = [(+1,+1), (-1,-1), (+1,+1), (-1,-1), (+1,+1),(+1,+1)].\]
\end{remark}

In enumeration it is often advantageous and interesting to work with generating functions.
We will consider the following generating functions
\begin{align*}
    G(x; \bA^{(r)}) &= \sum_{n \geq 0} a^{(r)}_n x^{n} & G(x; \bD^{(r)}) &= \sum_{n \geq 0} d^{(r)}_n x^{n}\\ 
    G(x; \bB^{(r)}) &= \sum_{n \geq 0} b^{(r)}_n x^{n} & G(x; \bE^{(r)}) &= \sum_{n \geq 0} e^{(r)}_n x^{n}\\
    G(x; \bC^{(r)}) &= \sum_{n \geq 0} c^{(r)}_n x^{n} & G(x; \bF^{(r)}) &= \sum_{n \geq 0} f^{(r)}_n x^{n}
\end{align*}
which are the ordinary generating functions for our sequences.
Also for any $r \geq 0$, $v \in \calS_r$, and $\bL \in \{\bA^{(r)}, \bB^{(r)}, \bC^{(r)}, \bD^{(r)}, \bE^{(r)}, \bF^{(r)}\}$ we let  $\bL (v) \subseteq L$ be the sublanguage of walks which start with $v$.
Furthermore, we let
\[G(x; \bL (v)) = \sum_{n \geq 0} |\bL (v) \cap \calS_d^{2n}| x^n\]
be the generating function for these walks with a given first step.

We now review some facts on power series which can be found in~\cite{Stanley}.
A formal power series $G(x)$ is \emph{algebraic} if it satisfies a nontrivial polynomial equation.
A \emph{holonomic} sequence is a sequence that satisfies a recurrence relation with polynomial coefficients.
That is, a sequence $\{t_n\}_{n \geq 0}$ such that
\[p_j(n) t_{n+j} + p_{j-1}(n) t_{n+j-1} + \cdots + p_0(n) t_n = 0\]
for some $j$ and polynomials $p_i(n)$ not all of which are equal to zero.
The sequence of coefficients of an algebraic formal power series is holonomic.

From a generating function one can find the asymptotics of its sequence of coefficients.
Methods for obtaining these asymptotic expressions are well established and can be found in the text~\cite{analytic}.
We write $f(n) \sim g(n)$ to mean that
\[\lim_{n \to \infty} \frac{f(n)}{g(n)} = 1\]
which is the standard notation.
Consider a generating function $G(z) = \sum_{n \geq 0} g_n z^n$ viewed as a complex analytic function and let $\rho$ be its unique singularity closest to the origin.
Assume $G(z)$ approaches $C \cdot (1 - \frac{z}{\rho})^{-\alpha}$ as $z \to \rho$ for some constant $C$ where $\alpha$ is neither $0$ nor a negative integer.
This is the situation that will be relevant to use, and under these assumptions
\[g_n \sim \frac{C\rho^{-n} n^{\alpha-1}}{\Gamma(\alpha)}\]
where $\Gamma(\alpha)$ denotes the Gamma function.

Let us now show using formal language and generating function theory that each of our generating functions are algebraic from which it follows each of the sequences is holonomic.
Our goal will be to further understand these generating functions and find the recurrence relations that the sequences obey.
The Chomsky--Sch\"utzenberger enumeration theorem~\cite{CS} (see~\cite{KS,Pan} for proofs) says that the length generating function of an unambigous context free language is algebraic.
A deterministic context free language is a language recognized by a deterministic push down automata (DPDA).
Ginsburg and Greibach~\cite{GG} have shown that a deterministic context free language is an unambiguous context free language and that when such a language is intersected with a regular language it remains a deterministic context free language.

\begin{figure}
    \centering
    \begin{tikzpicture}[node distance=4cm,shorten >=2pt]
    \node[state, initial, accepting] (A) {};
    \node[state] (B)[right=of A] {};
    \path[->] (A) edge[bend left=45] node[above, text width=5cm,align=center]{$(v,1),Z_0 \to Z_0 U$\\$(v,-1),Z_0 \to Z_0 D$} (B);
    \path[->] (B) edge[bend left=45] node[below]{$\epsilon,Z_0 \to Z_0$} (A);
    \path[->] (B) edge[loop right] node[text width=3cm,align=left] {$(v,1), U \to UU$\\$(v,1), D \to \epsilon$\\$(v,-1), U \to \epsilon$\\$(v,-1), D \to DD$} ();
    \end{tikzpicture}
    \caption{A DPDA recognizing $\bA^{(r)}$ where $v$ represents any vector in $\{\pm 1\}^r$.}
    \label{fig:DPDA_A}
\end{figure}

\begin{figure}
    \centering
    \begin{tikzpicture}[node distance=4cm,shorten >=2pt]
    \node[state, initial, accepting] (A) {};
    \node[state] (B)[right=of A] {};
    \path[->] (A) edge[bend left=45] node[above, text width=5cm,align=center]{$(v,1),Z_0 \to Z_0 U$} (B);
    \path[->] (B) edge[bend left=45] node[below]{$\epsilon,Z_0 \to Z_0$} (A);
    \path[->] (B) edge[loop right] node[text width=3cm,align=left] {$(v,1), U \to UU$\\$(v,-1), U \to \epsilon$} ();
    \end{tikzpicture}
    \caption{A DPDA recognizing $\bD^{(r)}$ where $v$ represents any vector in $\{\pm 1\}^r$.}
    \label{fig:DPDA_D}
\end{figure}

\begin{theorem}
Let $\bL \in \{\bA^{(r)}, \bB^{(r)}, \bC^{(r)}, \bD^{(r)}, \bE^{(r)}, \bF^{(r)}\}$ be one of our languages,
then $L$ is a deterministic context free language.
Hence, $G(x;L)$ is algebraic and its sequence of coefficients is holomonic. 
\label{thm:CFL}
\end{theorem}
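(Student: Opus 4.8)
The plan is to establish the two ``base'' languages $\bA^{(r)}$ and $\bD^{(r)}$ directly by verifying that the automata already drawn in Figures~\ref{fig:DPDA_A} and~\ref{fig:DPDA_D} are deterministic pushdown automata recognizing them, and then to obtain the remaining four languages by intersection with regular languages. The concluding statements about algebraicity and holonomicity follow by assembling the results quoted above: Ginsburg--Greibach (a DCFL is an unambiguous CFL, and DCFLs are closed under intersection with a regular language), Chomsky--Sch\"utzenberger (the length generating function of an unambiguous CFL is algebraic), and the recalled fact that the coefficient sequence of an algebraic series is holonomic.

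For the base cases the key object is the stack invariant. In the automaton for $\bA^{(r)}$ the contents of the stack above $Z_0$ record the current value of the final coordinate $x_{r+1}$: a block of $h$ copies of $U$ encodes $x_{r+1}=h>0$, a block of $h$ copies of $D$ encodes $x_{r+1}=-h<0$, and an exposed $Z_0$ encodes $x_{r+1}=0$. Reading a step $(v,+1)$ increments $x_{r+1}$ (push a $U$, or pop a $D$) and reading $(v,-1)$ decrements it, so the invariant is preserved no matter which of the first $r$ coordinates $v$ is chosen; this is exactly why the transitions may be labelled by an arbitrary $v$. The $\epsilon$-transition from $B$ back to the accepting state $A$ is enabled precisely when the top symbol is $Z_0$, i.e.\ when $x_{r+1}=0$, which is the accepting condition, and since the empty word leaves the machine in the accepting start state the walk of length $0$ is accepted as well. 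I would then verify determinism explicitly: the sole $\epsilon$-move is enabled only when $Z_0$ is exposed, and in that configuration no letter-consuming transition is defined (the loop at $B$ acts only on $U$ or $D$), so there is never a choice between an $\epsilon$-move and a letter-move, nor any competing pair of letter-moves. The automaton for $\bD^{(r)}$ is the restriction that never pushes $D$ and leaves $A$ only on an up-step, which forces $x_{r+1}\geq 0$ throughout; the same invariant and the same determinism check apply verbatim.

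With $\bA^{(r)}$ and $\bD^{(r)}$ exhibited as DCFLs, the four remaining languages are handled by the identities recorded just before the theorem, namely $\bB^{(r)}=\bA^{(r)}\cap\bX^{(r)}$, $\bC^{(r)}=\bA^{(r)}\cap\bY^{(r)}$, $\bE^{(r)}=\bD^{(r)}\cap\bX^{(r)}$, and $\bF^{(r)}=\bD^{(r)}\cap\bY^{(r)}$. Since $\bX^{(r)}$ and $\bY^{(r)}$ are regular, the Ginsburg--Greibach closure of deterministic context-free languages under intersection with a regular language shows that each of these four is again deterministic context-free. Thus all six languages $L$ are DCFLs; applying Ginsburg--Greibach once more makes each $L$ unambiguous context-free, Chomsky--Sch\"utzenberger then yields that $G(x;L)$ is algebraic, and the recalled fact gives that its coefficient sequence is holonomic.

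The one step that I expect to require genuine care, and hence the main obstacle, is the joint verification that the two pushdown automata are \emph{both} correct and deterministic. Correctness reduces to a clean induction on the walk length using the stack invariant, but determinism must be checked against the formal definition of a DPDA; the delicate point is the interaction of the $\epsilon$-transition with the letter-consuming transitions, and once one confirms that these never compete---because they are separated by which stack symbol is exposed---everything else is a routine assembly of the quoted closure and enumeration theorems.
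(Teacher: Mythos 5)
Your proposal is correct and follows essentially the same route as the paper: exhibit the DPDAs of Figures~\ref{fig:DPDA_A} and~\ref{fig:DPDA_D} for $\bA^{(r)}$ and $\bD^{(r)}$, intersect with the regular languages $\bX^{(r)}$ and $\bY^{(r)}$ using Ginsburg--Greibach closure, and conclude via Chomsky--Sch\"utzenberger and the algebraic-implies-holonomic fact. The only difference is that you spell out the stack invariant and the determinism check (the non-competition of the $\epsilon$-move with letter-moves, separated by the exposed stack symbol), which the paper leaves implicit by simply displaying the automata.
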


\begin{proof}
It suffices to prove that each of $\bA^{(r)}$ and $\bD^{(r)}$ is recognized by a DPDA.
Both of the languages $\bX^{(r)}$ and $\bY^{(r)}$ consisting of walks which respectively avoid the backtracking pattern $[v,-v]$ as well as which avoid the repeat pattern $[v,v]$ (but can end anywhere) are regular.
Since $\bB^{(r)} = \bA^{(r)} \cap \bX^{(r)}$, $\bC^{(r)} = \bA^{(r)} \cap \bY^{(r)}$, $\bE^{(r)} = \bD^{(r)} \cap \bX^{(r)}$, and $\bF^{(r)} = \bD^{(r)} \cap \bY^{(r)}$ it will follow that each of these will be deterministic context free languages.
By the Chomsky--Sch\"utzenberger enumeration theorem all the generating functions will be algebraic.
Therefore each of the sequences must be holonomic.
DPDAs recognizing the languages $\bA^{(r)}$ and $\bD^{(r)}$ are shown in Figure~\ref{fig:DPDA_A} and Figure~\ref{fig:DPDA_D} respectively.
\end{proof}

\begin{remark}
Theorem~\ref{thm:CFL} essentially follows from the definition of our languages.
Its purpose is to make rigorous the fact that generating functions are algebraic and that their sequences of coefficients are holonomic.
It is worth noting that there are other techniques that can also be used to obtain the algebraicity of the generating functions (e.g. the vectorial kernel method).
We also note the method above the flexible to other situations (e.g. intersecting with a different regular language coming from a pattern or patterns).
\end{remark}

Now that we know our generating functions are algebraic we turn our attention to describing these generating functions and their coefficient as explicitly as possible.
The values of $a_n^{(r)}$ and $F(x; \bA^{(r)})$ can be found without difficulty.
We have that
\begin{equation}
    a^{(r)}_n = 2^{2nr} \binom{2n}{n}
    \label{eq:a}
\end{equation}
since any walk in $\bA^{(r)}$ is
\[[v_1, v_2, \dots, v_{2n}]\]
where for coordinates $1 \leq i \leq r$ in $v_1, \dots, v_{2n}$ form any binary string and the last coordinate makes a balanced binary string.
Hence, $a^{(r)}_n$ satisfies the recurrence
\begin{equation}
    na^{(r)}_n = 2^{2r+1}(2n-1)a^{(r)}_{n-1}
\end{equation}
of the form guaranteed from being holonomic.
It follows that
\begin{equation}
    G(x;\bA^{(r)}) = \frac{1}{\sqrt{1 - 2^{2r+2}x}} = \frac{1}{\sqrt{1 - 4(4^rx)}}
    \label{eq:genA}
\end{equation}
is the expression for the generating function.
By similar arguments we find that
\begin{equation}
    d^{(r)}_n = \frac{2^{2nr}}{n+1}\binom{2n}{n} = 2^{2nr}C_n
    \label{eq:d}
\end{equation}
where $C_n = \frac{1}{n+1} \binom{2n}{n}$ is the $n$th Catalan number.
Also,
\begin{equation}
    (n+1)d^{(r)}_n = 2^{2r+1}(2n-1)d^{(r)}_{n-1}\end{equation}
and
\begin{equation}
    G(x; \bD^{(r)}) = \frac{1 - \sqrt{1 - 4(4^rx)}}{2}
    \label{eq:genD}
\end{equation}
give the recurrence and generating function for $d^{(r)}_n$

\section{Formulas and recurrences}
\label{sec:formula}

In this section we find formulas for each of the sequences $\{b^{(r)}_n\}_{n \geq 0 }$, $\{c^{(r)}_n\}_{n \geq 0 }$, $\{e^{(r)}_n\}_{n \geq 0 }$, and $\{f^{(r)}_n\}_{n \geq 0 }$ consisting of a sum of hypergeometric terms and as an evaluation of a hypergeometric function.
Zeilberger's algorithm~\cite{Z90,Z91} is then applied to obtain a recurrence.

\subsection{Ending on a hyperplane}

For a given integer $n$, an \emph{integer composition} of $n$ is an ordered sequence of positive integers which sum to $n$.
The notion of an integer composition will be essential to the proofs of the next two theorems.
We will write $\alpha \vDash n$ to denote that $\alpha$ is an integer composition of $n$.
The \emph{length} of an integer composition $\alpha$ is the number of elements in the sequence and is denoted by $\ell(\alpha)$.
For example, $\alpha = (1,1,3)$ and $\beta = (1,3,1)$ are two distinct integer compositions of $5$ both of length $\ell(\alpha) = \ell(\beta) = 3$.
A known fact about integer compositions which we will use is that there are $\binom{n-1}{k-1}$ integer compositions of $n$ with length equal to $k$.

\begin{theorem}
For any $r \geq 1$ and  $n \geq 1$ we have
\begin{align*}
   b^{(r)}_n &= 2\sum_{k=1}^n (2^r-1)^{2k-2}2^{r(2n-2k+1)} \binom{n-1}{k-1} \left((2^r-1) \binom{n-1}{k-1} + 2^r \binom{n-1}{k-2} \right)\\
   &= 2\left(2^{2rn} - 2^{r(2n-1)}\right) \pFq{3}{2}{-n, -n+1, 2^r n -n +1}{1, 2^r n - n}{(2^r-1)^2 \left(\frac{1}{2}\right)^{2r}}
\end{align*}
which satisfies the recurrence
\[nb^{(r)}_n = 2\left((2^{2r+1} - 2^{r+1} +1)(n-1) + 2^{2r} - 2^r\right)b^{(r)}_{n-1} - (2^{r+1}-1)^2(n-2)b^{(r)}_{n-2}\]
for $n \geq 3$ with $b^{(r)}_1 = 2^{r+1}(2^r - 1)$ and
\[b^{(r)}_2 = 2^{3r+1}(2^r-1) + 2^{2r+1}(2^r-1)^2 + 2^{r+1}(2^r-1)^3.\]

%We have the hypergeometric evaluation
%\[ b^{(r)}_n = 2\left(4^{dn} - 2^{r(2n-1)}\right) \pFq{3}{2}{-n, -n+1, 2^r n -n +1}{1, 2^r n - n}{(2^r-1)^2 \left(\frac{1}{2}\right)^{2r}}\]
%for each $n > 0$.

\label{thm:B}
\end{theorem}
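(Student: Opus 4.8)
The plan is to decouple the final coordinate from the other $r$ coordinates. Write each step as $v_i = (u_i, s_i)$ with $u_i \in \{\pm 1\}^r$ and $s_i \in \{\pm 1\}$. The condition that the walk ends with $x_{r+1} = 0$ says precisely that $s_1, \dots, s_{2n}$ is a \emph{balanced} sequence with $n$ entries equal to $+1$ and $n$ equal to $-1$, while the first $r$ coordinates $u_1, \dots, u_{2n}$ are unconstrained except for backtracking. Since $v_{i+1} = -v_i$ holds if and only if both $s_{i+1} = -s_i$ and $u_{i+1} = -u_i$, the backtracking constraint couples the two pieces only at the transitions where the last coordinate flips sign.

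First I would fix the balanced sequence $s_1, \dots, s_{2n}$ and count the admissible fillings $u_1, \dots, u_{2n}$. At a transition with $s_{i+1} = s_i$ no backtracking is possible, so $u_{i+1}$ is free ($2^r$ choices); at a transition with $s_{i+1} = -s_i$ one must avoid $u_{i+1} = -u_i$, leaving $2^r - 1$ choices. Together with the $2^r$ choices for $u_1$, this count is multiplicative and depends only on the number $j$ of sign-changes of $s_1, \dots, s_{2n}$, yielding exactly $2^{r(2n-j)}(2^r-1)^j$ admissible fillings. The key point is that this factor is independent of the actual values of the $u_i$, so the enumeration reduces to a weighted count of balanced sequences graded by $j$.

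Next I would count balanced sequences by their number of sign-changes. Writing such a sequence as its maximal runs, the number of sign-changes is $j = m - 1$ where $m$ is the number of runs, and the runs alternate in sign. Using that there are $\binom{n-1}{k-1}$ integer compositions of $n$ into $k$ parts, I would split according to the parity of $m$: with $m = 2k$ runs there are $2\binom{n-1}{k-1}^2$ balanced sequences, and with $m = 2k-1$ runs there are $2\binom{n-1}{k-1}\binom{n-1}{k-2}$ of them, the factor $2$ in each case accounting for the two choices of the initial sign. Substituting $j = m-1$ into $2^{r(2n-j)}(2^r-1)^j$, the even-$m$ family contributes $2\binom{n-1}{k-1}^2 2^{r(2n-2k+1)}(2^r-1)^{2k-1}$ and the odd-$m$ family contributes $2\binom{n-1}{k-1}\binom{n-1}{k-2}2^{r(2n-2k+2)}(2^r-1)^{2k-2}$. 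Re-indexing both families by a single parameter $k$ running from $1$ to $n$ (the vanishing of $\binom{n-1}{-1}$ absorbing the lower boundary term of the odd family) collects the two pieces under the common factor $2(2^r-1)^{2k-2}2^{r(2n-2k+1)}\binom{n-1}{k-1}$ and produces exactly the claimed closed form for $b^{(r)}_n$.

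Finally, the ${}_3F_2$ evaluation and the recurrence are formal consequences of this single-sum formula. Pulling out the $k=1$ term, which equals $2(2^{2rn} - 2^{r(2n-1)})$, and forming the ratio of consecutive values of the combined summand gives, after simplifying the binomial quotients via $\binom{n-1}{k-2}/\binom{n-1}{k-1} = (k-1)/(n-k+1)$, a rational function of $k$. Its factorization yields the upper parameters $-n,\, -n+1,\, 2^r n - n + 1$, the lower parameters $1,\, 2^r n - n$, and the argument $(2^r-1)^2\left(\tfrac{1}{2}\right)^{2r}$, which is the stated hypergeometric form. The polynomial recurrence then follows by applying Zeilberger's algorithm (creative telescoping) to the summand, which is a proper hypergeometric term in $n$ and $k$, and the initial values $b^{(r)}_1$ and $b^{(r)}_2$ are obtained by direct evaluation of the sum. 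I expect the main obstacle to be purely bookkeeping: correctly handling the parity split of the runs together with the two starting signs, and verifying that the two re-indexed families combine precisely under the displayed common factor, rather than any deeper difficulty.
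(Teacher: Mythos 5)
Your proposal is correct and takes essentially the same route as the paper's proof: project onto the $x_{r+1}$-axis, observe that the first $r$ coordinates admit $2^r-1$ choices exactly at sign changes and $2^r$ elsewhere, and count balanced sequences by runs using the $\binom{n-1}{k-1}$ composition count, your parity-of-runs bookkeeping being just a reorganization of the paper's split $\ell(\beta)\in\{k-1,k\}$ after fixing the initial letter. The paper likewise gets the ${}_3F_2$ evaluation and the recurrence by automated hypergeometric methods (Zeilberger's algorithm via Maple, as shown in its Appendix), matching your final step.
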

\begin{proof}
Recall that $b^{(r)}_n$ counts the number of walks of length $2n$ which avoid backtracking and end with $x_{r+1} = 0$.
Consider the projection of such a walk onto the $x_{r+1}$-axis.
This projection can be encoded by a word with up steps $U$ and down steps $D$ corresponding to if step in the walk had the last coordinate $+1$ or $-1$ respectively.
Let us assume the word starts with a $U$. 
The case that starts with a $D$ is completely analogous.
The lengths of the runs of $U$'s and $D$'s will give two integer compositions $\alpha \vDash n$ and $\beta \vDash n$ respectively where $\ell = \ell(\beta) \in \{k-1, k\}$ when $\ell(\alpha) = k$.
The vector corresponding to each $U$ and $D$ has the last coordinate determined and then has either $2^r$ or $2^r-1$ choices for which $v \in \{\pm 1\}^r$ it could have came from making the prefix of the first $r$ coordinates of the vector.
Looking at only the $U$'s, we see that the $U's$ with only $2^r - 1$ choices are those in positions $\alpha_1+1, \alpha_1 + \alpha_2+1, \dots, \alpha_1 + \cdots + \alpha_{k-1}+1$.
Looking at only the $D$'s, we see that the $D$'s with only $2^r - 1$ choices are those in positions $1, \beta_1+1, \beta_1 + \beta_2+1, \dots, \beta_1 + \cdots + \beta_{\ell-1}+1$.
Thus we find that
\begin{align*}
     b^{(r)}_n &= 2\sum_{k = 1}^n \sum_{\substack{\alpha \vDash n \\ \ell(\alpha) = k}} (2^r-1)^{k-1}2^{r(n-k+1)}\left( \sum_{\substack{\beta \vDash n \\ \ell(\beta) = k}}(2^r-1)^k 2^{r(n-k)} + \sum_{\substack{\beta \vDash n \\ \ell(\beta) = k-1}}(2^r-1)^{k-1} 2^{r(n-k+1)}\right) \\
     &= 2\sum_{k = 1}^n (2^r-1)^{k-1}2^{r(n-k+1)} \binom{n-1}{k-1} \left( (2^r-1)^k 2^{r(n-k)} \binom{n-1}{k-1} + (2^r-1)^{k-1} 2^{r(n-k+1)} \binom{n-1}{k-2} \right)\\
     &= 2\sum_{k=1}^n (2^r-1)^{2k-2}2^{r(2n-2k+1)} \binom{n-1}{k-1} \left((2^r-1) \binom{n-1}{k-1} + 2^r \binom{n-1}{k-2} \right)
\end{align*}
Once we have $b^{(r)}_n$ expressed in this way as a sum over hypergeometric terms we may find the hypergeometric evaluation and recurrence using automated methods.
In the appendix we show how this computation can be performed in Maple.

\end{proof}

\begin{example}
For $r=1$ and $n = 4$ along with the two integer compositions $(1,2,1)$ and $(2,2)$ we have $2^5 = 32$ lattice walks in the plane contributing to the count of $a^{(1)}_4$.
Projecting onto the $x_2$-axis all such walks will be either 
\[[{}^2U,{}^1D, {}^2D,{}^1U,{}^2U,{}^1D,{}^2D,{}^1U]\] 
or
\[[{}^2D,{}^1U,{}^2U,{}^1D,{}^2D,{}^1U,{}^2U,{}^1D].\]
Here the superscripts indicate how many choices we have for each step as a $2$-dimensional walk.
We let
\[ \{\pm 1\}^2 = \{NE := (1,1), NW := (-1,1), SE := (1,-1), SW := (-1,-1)\}.\]
Accounting for the symmetry of reflection over the $x_1$-axis and the $x_2$-axis (i.e. exchanging $E$ and $W$ or exchanging $N$ and $S$) there are $8$ walks each starting with $NE$ which are shown both as words and graphed in the plane in Figure~\ref{fig:PlaneWalks}.
\end{example}

\begin{figure}
    \centering
\begin{tikzpicture}[scale=0.35]

\begin{scope}[shift={(-17.5,0)}]
\draw[step=1] (-5,-2) grid (9,2);
\draw[thick] (-1,0)--(9,0);
\draw[thick] (0,-2)--(0,2);
\draw[thick, blue, -{latex}] (0,0)--(1,1)--(2,0)--(3,-1)--(4,0)--(5,1)--(6,0)--(7,-1)--(8,0);
\node (x) at (2,-3) {$[NE,SE,SE,NE,NE,SE,SE,NE]$};
\end{scope}

\begin{scope}[shift={(2.5,0)}]
\draw[step=1] (-5,-2) grid (9,2);
\draw[thick] (-1,0)--(9,0);
\draw[thick] (0,-2)--(0,2);
\draw[thick, blue, -{latex}] (0,0)--(1,1)--(2,0)--(3,-1)--(4,0)--(5,1)--(6,0)--(5,-1)--(4,0);
\node (x) at (2,-3) {$[NE,SE,SE,NE,NE,SE,SW,NW]$};
\end{scope}

\begin{scope}[shift={(-17.5,-7)}]
\draw[step=1] (-5,-2) grid (9,2);
\draw[thick] (-1,0)--(9,0);
\draw[thick] (0,-2)--(0,2);
\draw[thick, blue, -{latex}] (0,0)--(1,1)--(2,0)--(3,-1)--(4,0)--(3,1)--(2,0)--(3,-1)--(4,0);
\node (x) at (2,-3) {$[NE,SE,SE,NE,NW,SW,SE,NE]$};
\end{scope}

\begin{scope}[shift={(2.5,-7)}]
\draw[step=1] (-5,-2) grid (9,2);
\draw[thick] (-1,0)--(9,0);
\draw[thick] (0,-2)--(0,2);
\draw[thick, blue, -{latex}] (0,0)--(1,1)--(2,0)--(1,-1)--(0,0)--(1,1)--(2,0)--(3,-1)--(4,0);
\node (x) at (2,-3) {$[NE,SE,SW,NW,NE,SE,SE,NE]$};
\end{scope}

\begin{scope}[shift={(-17.5,-14)}]
\draw[step=1] (-5,-2) grid (9,2);
\draw[thick] (-1,0)--(9,0);
\draw[thick] (0,-2)--(0,2);
\draw[thick, blue, -{latex}] (0,0)--(1,1)--(2,0)--(3,-1)--(4,0)--(3,1)--(2,0)--(1,-1)--(0,0);
\node (x) at (2,-3) {$[NE,SE,SE,NE,NW,SW,SW,NW]$};
\end{scope}

\begin{scope}[shift={(2.5,-14)}]
\draw[step=1] (-5,-2) grid (9,2);
\draw[thick] (-1,0)--(9,0);
\draw[thick] (0,-2)--(0,2);
\draw[thick, blue, -{latex}] (0,0)--(1,1)--(2,0)--(1,-1)--(0,0)--(1,1)--(2,0)--(1,-1)--(0,0);
\node (x) at (2,-3) {$[NE,SE,SW,NW,NE,SE,SW,NW]$};
\end{scope}

\begin{scope}[shift={(-17.5,-21)}]
\draw[step=1] (-5,-2) grid (9,2);
\draw[thick] (-1,0)--(9,0);
\draw[thick] (0,-2)--(0,2);
\draw[thick, blue, -{latex}] (0,0)--(1,1)--(2,0)--(1,-1)--(0,0)--(-1,1)--(-2,0)--(-1,-1)--(0,0);
\node (x) at (2,-3) {$[NE,SE,SW,NW,NW,SW,SE,NE]$};
\end{scope}

\begin{scope}[shift={(2.5,-21)}]
\draw[step=1] (-5,-2) grid (9,2);
\draw[thick] (-1,0)--(9,0);
\draw[thick] (0,-2)--(0,2);
\draw[thick, blue, -{latex}] (0,0)--(1,1)--(2,0)--(1,-1)--(0,0)--(-1,1)--(-2,0)--(-3,-1)--(-4,0);
\node (x) at (2,-3) {$[NE,SE,SW,NW,NW,SW,SW,NE]$};
\end{scope}
\end{tikzpicture}
\caption{Here are $8$ walks from which all $32$ walks in the plane avoiding backtracking corresponding to the integer compositions $(1,2,1)$ and $(2,2)$ can be obtained.}
\label{fig:PlaneWalks}
\end{figure}
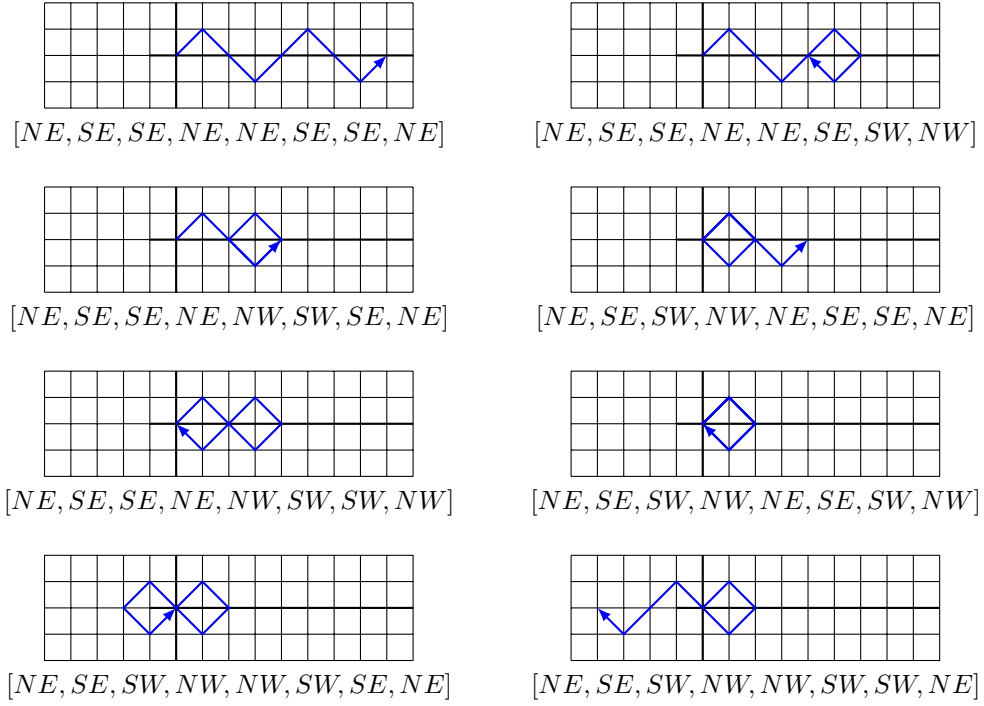

\begin{theorem}
For any $r \geq 1$ and $n \geq 1$ we have
\begin{align*}
c^{(r)}_n &= 2\sum_{k=1}^n (2^r-1)^{2n-2k}2^{r(2k-1)} \binom{n-1}{k-1} \left(2^r\binom{n-1}{k-1} + (2^r-1) \binom{n-1}{k-2} \right)\\
&= 2^{2r+1} (2^r-1)^{2n-2} \pFq{3}{2}{-n, -n+1, -2^r n + 1}{1, -2^r n}{\frac{2^{2r}}{(2^r-1)^2}}
\end{align*}
which satisfies the recurrence
\[nc^{(r)}_n = 2\left((2^{2r+1} - 2^{r+1} + 1)(n-1) + 2^{2r} - 2^r\right)c^{(r)}_{n-1} - (2^{r+1} - 1)^2 (n-2)c^{(r)}_{n-2}\]
for $n \geq 3$ with $c^{(r)}_1 = 2^{2r+1}$ and
\[c^{(r)}_2 = 2^{4r+1} + 2^{3r+1}(2^r-1) + 2^{2r+1}(2^r-1)^2.\]
\label{thm:C}
\end{theorem}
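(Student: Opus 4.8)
The plan is to mirror the proof of Theorem~\ref{thm:B} almost verbatim. I would project a walk of length $2n$ in $\bC^{(r)}$ onto the $x_{r+1}$-axis to obtain a balanced word in $U$ and $D$, each appearing $n$ times, recording whether the last coordinate of each step is $+1$ or $-1$. As before, I would assume the word begins with $U$; the case beginning with $D$ contributes an identical count and hence the overall factor of $2$. The run lengths then determine two compositions $\alpha \vDash n$ and $\beta \vDash n$ of the $U$-runs and $D$-runs respectively, and since the word alternates run types, $\ell(\alpha) = k$ forces $\ell(\beta) \in \{k-1, k\}$ exactly as in Theorem~\ref{thm:B}.

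The essential difference, and the one place where genuine thought is required, is that the repeat pattern $[v,v]$ constrains steps \emph{within} a run rather than \emph{between} runs. A step contributing a $U$ has last coordinate $+1$ and a step contributing a $D$ has last coordinate $-1$, so two consecutive steps of opposite type can never equal one another; the constraint is therefore active precisely when a $U$ follows a $U$ or a $D$ follows a $D$. I would argue that each step beginning a new run (the very first step, or any step immediately following a step of the opposite type) is entirely free in its first $r$ coordinates, giving $2^r$ choices, whereas each step interior to a run must differ from its predecessor in the first $r$ coordinates, giving $2^r-1$ choices. This is exactly the reverse of the situation in Theorem~\ref{thm:B}, where the run-starting steps were the constrained ones; in effect the roles of $2^r$ and $2^r - 1$ are interchanged.

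Counting accordingly, a fixed $U$-composition $\alpha$ of length $k$ contributes $2^{rk}(2^r-1)^{n-k}$, since there are $k$ run-starting $U$'s and $n-k$ interior ones, and a fixed $D$-composition $\beta$ of length $\ell$ contributes $2^{r\ell}(2^r-1)^{n-\ell}$. Summing over $\beta$ with $\ell \in \{k-1, k\}$ using the fact that there are $\binom{n-1}{\ell-1}$ compositions of $n$ of length $\ell$, and then over the $\binom{n-1}{k-1}$ choices of $\alpha$, I would factor out $2^{r(2k-1)}(2^r-1)^{2n-2k}$ to recover the displayed sum. With the count written as a sum of hypergeometric terms, the closed ${}_3F_2$ evaluation and the three-term recurrence follow from Zeilberger's algorithm exactly as recorded in the Appendix for Theorem~\ref{thm:B}, so no new obstacle arises there. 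The initial values $c^{(r)}_1 = 2^{2r+1}$ and the stated value of $c^{(r)}_2$ are then immediate either from the sum or by direct enumeration.
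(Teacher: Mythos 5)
Your proposal is correct and follows essentially the same route as the paper: projecting onto the $x_{r+1}$-axis, splitting by run-length compositions $\alpha,\beta\vDash n$ with $\ell(\beta)\in\{k-1,k\}$, observing that the repeat constraint is active only within runs so that run-starting steps get $2^r$ choices and interior steps $2^r-1$ (the paper likewise notes this just swaps the exponents of $2^r$ and $2^r-1$ relative to Theorem~\ref{thm:B}), and then invoking Zeilberger's algorithm for the recurrence and the ${}_3F_2$ form. Your justification of the $2^r$ versus $2^r-1$ counts is sound, and the factorization recovering the displayed sum checks out.
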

\begin{proof}
Recall that $c^{(r)}_n$ counts the number of walks of length $2n$ which avoid consecutive repeated steps and end with $x_{r+1} = 0$.
Proceeding similarly to the proof of Theorem~\ref{thm:B}, we consider the projection of such a walk onto the $x_{r+1}$-axis.
Again this projection can be encoded by a word with up steps $U$ and down steps $D$ meaning the corresponding step in the walk had the last coordinate $+1$ or $-1$ respectively.
Let us assume the word starts with a $U$ since the case that starts with a $D$ is completely analogous.
The lengths of the runs of $U$'s and $D$'s will give two integer compositions $\alpha \vDash n$ and $\beta \vDash n$ respectively where $\ell = \ell(\beta) \in \{k-1, k\}$ when $\ell(\alpha) = k$.
The vector corresponding to each $U$ and $D$ has the last coordinate determined and then has either $2^r$ or $2^r-1$ choices for which $v \in \{\pm 1\}^r$ it could have came from making the prefix of the first $r$ coordinates.
Looking at only the $U$'s, we see that the $U's$ with $2^r$ choices are those in positions $1, \alpha_1+1, \alpha_1 + \alpha_2+1, \dots, \alpha_1 + \cdots + \alpha_{k-1}+1$.
Looking at only the $D$'s, we see that the $D$'s with $2^r$ choices are those in positions $1, \beta_1+1, \beta_1 + \beta_2+1, \dots, \beta_1 + \cdots + \beta_{\ell-1}+1$.
So, it follows that
\begin{align*}
     c^{(r)}_n &= 2\sum_{k = 1}^n \sum_{\substack{\alpha \vDash n \\ \ell(\alpha) = k}} (2^r-1)^{n-k}2^{rk}\left( \sum_{\substack{\beta \vDash n \\ \ell(\beta) = k}}(2^r-1)^{n-k} 2^{rk} + \sum_{\substack{\beta \vDash n \\ \ell(\beta) = k-1}}(2^r-1)^{n-k+1} 2^{r(k-1)}\right) \\
     &= 2\sum_{k = 1}^n (2^r-1)^{n-k}2^{rk} \binom{n-1}{k-1} \left( (2^r-1)^{n-k} 2^{rk} \binom{n-1}{k-1} + (2^r-1)^{n-k+1} 2^{r(k-1)} \binom{n-1}{k-2} \right)\\
     &= 2\sum_{k=1}^n (2^r-1)^{2n-2k}2^{r(2k-1)} \binom{n-1}{k-1} \left(2^r\binom{n-1}{k-1} + (2^r-1) \binom{n-1}{k-2} \right)
\end{align*}
Notice the only difference with Theorem~\ref{thm:B} is that exponents of $(2^r-1)$ and $2^r$ are changed.
Once we have $c^{(r)}_n$ expressed in this way as a sum over hypergeometric terms we may find the hypergeometric evaluation and recurrence using automated methods.
In the appendix we show how this computation can be performed in Maple.
\end{proof}

\subsection{Ending on a hyperplane and restricted to a halfspace}
\begin{theorem}
For any $r \geq 1$ and $n \geq 1$ we have
\begin{align*}
e^{(r)}_n &= \frac{1}{n}\sum_{k=1}^n (2^r -1)^{2k-1}2^{r(2n-2k+1)}  \binom{n}{k}\binom{n}{k-1}\\
&= \left(2^{2rn} - 2^{r(2n-1)}\right) \pFq{2}{1}{-n, -n+1}{2}{(2^r-1)^2 \left(\frac{1}{2}\right)^{2r}}
\end{align*}
which satisfies the recurrence
\[(n+1)e^{(r)}_n = (2^{2r+1} - 2^{r+1} +1)(2n-1)e^{(r)}_{n-1} - (2^{r+1}-1)^2 (n-2)e^{(r)}_{n-2}\]
for $n \geq 3$ with $e^{(r)}_1 = 2^r(2^r - 1)$ and $e^{(r)}_2 = 2^{3r}(2^r-1) + 2^r(2^r-1)^3$.
\label{thm:E}
\end{theorem}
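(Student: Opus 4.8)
The plan is to reprise the argument of Theorem~\ref{thm:B}, the essential change being that the projection onto the $x_{r+1}$-axis now produces a \emph{Dyck path} rather than an arbitrary balanced word, since the walks counted by $e^{(r)}_n$ additionally satisfy $x_{r+1} \geq 0$ at all times. First I would project a walk in $\bE^{(r)}$ onto its last coordinate, recording each step with final coordinate $+1$ as an up step $U$ and each step with final coordinate $-1$ as a down step $D$. The halfspace condition forces the resulting $U/D$ word to be a Dyck path of semilength $n$: in particular the first step must be $U$, since a leading $D$ would immediately give $x_{r+1} = -1 < 0$, and the last step must be $D$. Consequently, unlike in Theorem~\ref{thm:B}, there is no factor of $2$ and no separate case for words beginning with $D$.

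Next I would decompose the Dyck path into maximal runs as $[U^{\alpha_1} D^{\beta_1} \cdots U^{\alpha_k} D^{\beta_k}]$. Because the path begins with $U$ and ends with $D$, its ascending and descending runs are equal in number, so $\ell(\alpha) = \ell(\beta) = k$ and only a single case arises (in contrast to the two cases $\ell(\beta) \in \{k-1, k\}$ of Theorem~\ref{thm:B}). The integer $k$ is exactly the number of peaks of the path, and the number of Dyck paths of semilength $n$ with exactly $k$ peaks is the Narayana number $\frac{1}{n}\binom{n}{k}\binom{n}{k-1}$ (see~\cite{Stanley}). This Narayana count replaces the independent composition counts $\binom{n-1}{k-1}$ that multiplied freely in Theorem~\ref{thm:B}, and it is here that the nonnegativity constraint enters: one cannot simply multiply a count for $\alpha$ by a count for $\beta$, since the pair must obey the ballot condition $\alpha_1 + \cdots + \alpha_j \geq \beta_1 + \cdots + \beta_j$ for every $j$, and it is precisely the admissible pairs that the Narayana number enumerates. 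Recognizing and justifying this count is the one genuinely new ingredient, and I expect it to be the step demanding the most care.

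I would then count, for a fixed underlying Dyck path with $k$ peaks, the number of ways to fill in the first $r$ coordinates of the steps, exactly as in Theorem~\ref{thm:B}. The backtracking constraint only operates across a run boundary: within a run, consecutive steps share the sign of their last coordinate, so $[w,-w]$ cannot occur and all $2^r$ prefixes remain available; whereas the first step of each new run follows a step of opposite final sign and loses precisely the one prefix that would complete a backtracking pair, leaving $2^r - 1$ choices. Since the path has $2k$ runs and hence $2k-1$ boundaries, while the global first step is unconstrained, exactly $2k-1$ steps carry a factor $2^r - 1$ and the remaining $2n - (2k-1)$ steps carry a factor $2^r$, giving the per-path weight $(2^r-1)^{2k-1} 2^{r(2n-2k+1)}$.

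Summing this weight against the Narayana count over $k$ yields
\[
e^{(r)}_n = \frac{1}{n}\sum_{k=1}^n (2^r-1)^{2k-1} 2^{r(2n-2k+1)} \binom{n}{k}\binom{n}{k-1},
\]
which is the first displayed formula. Factoring out the $k=1$ term, which equals $(2^r-1) 2^{r(2n-1)} = 2^{2rn} - 2^{r(2n-1)}$, rewrites the remainder as the stated ${}_2F_1$ under the substitution $k = j+1$; the polynomial recurrence and the base cases $e^{(r)}_1$ and $e^{(r)}_2$ then follow from Zeilberger's algorithm just as in Theorems~\ref{thm:B} and~\ref{thm:C}, a purely automated computation I would record in the appendix.
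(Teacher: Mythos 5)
Your proposal is correct and takes essentially the same approach as the paper's proof: project onto the last coordinate to obtain a Dyck path, partition by the number of peaks using the Narayana numbers $\frac{1}{n}\binom{n}{k}\binom{n}{k-1}$, and observe that exactly the $2k-1$ steps directly following a peak or valley (your run boundaries) carry $2^r-1$ prefix choices while the remaining $2n-2k+1$ steps carry $2^r$, with the hypergeometric evaluation and recurrence then obtained by Zeilberger's algorithm. Your spelled-out verification of the run-boundary counting and of why the halfspace condition forces the Narayana count rather than a free product of composition counts is simply a more detailed rendering of the paper's argument.
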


\begin{proof}
Recall that $e^{(r)}_n$ counts the number of walks of length $2n$ which avoid backtracking and end with $x_{r+1} = 0$ while staying the half space defined by $x_{r+1} \geq 0$.
In the last coordinate we must have a Dyck path.
We partition Dyck paths of semilength $n$ by number of peaks.
The Narayana number
\[N(n,k) = \frac{1}{n} \binom{n}{k}\binom{n}{k-1}\]
gives the number of Dyck paths of semilength $n$ with $k$ peaks (and hence $k-1$ valleys).
For the first $r$ coordinates with have $2^r$ choices for each step except for the steps directly after a peak or valley for which we only have $(2^r-1)$ choices.
Once we have $e^{(r)}_n$ expressed in this way as a sum over hypergeometric terms we may find the hypergeometric evaluation and recurrence using automated methods.
In the appendix we show how this computation can be performed in Maple.
\end{proof}

\begin{theorem}
For any $r \geq 1$ and $n \geq 1$ we have
\begin{align*}
f^{(r)}_n &=  \frac{1}{n}\sum_{k=1}^n (2^r -1)^{2n - 2k}2^{2rk} \binom{n}{k}\binom{n}{k-1}\\
&= 2^{2r} (2^r-1)^{2n-2} \pFq{2}{1}{-n, -n+1}{2}{\frac{2^{2r}}{(2^r-1)^2}}
\end{align*}
which satisfies the recurrence
\[(n+1)f^{(r)}_n = (2^{2r+1} - 2^{r+1} +1)(2n-1)f^{(r)}_{n-1} - (2^{r+1}-1)^2 (n-2)f^{(r)}_{n-2}\]
for $n \geq 3$ with $f^{(r)}_1 = 2^{2r}$ and $f^{(r)}_2 =2^{4r} + 2^{2r}(2^r-1)^2$.
\label{thm:F}
\end{theorem}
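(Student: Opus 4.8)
The plan is to mirror the proof of Theorem~\ref{thm:E}, projecting each walk onto its last coordinate and partitioning by the number of peaks, but with the roles of the free and constrained steps interchanged to account for repeat avoidance rather than backtracking avoidance.

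First I would note that a walk counted by $f^{(r)}_n$ must, in its $x_{r+1}$-coordinate, trace out a Dyck path of semilength $n$: starting at the origin with $x_{r+1}\geq 0$ throughout forces the first step to have last coordinate $+1$, and finishing with $x_{r+1}=0$ makes the projection a Dyck path. (This also explains why, unlike Theorems~\ref{thm:B} and~\ref{thm:C}, there is no leading factor of $2$: the halfspace condition rules out walks whose last coordinate begins with a down-step.) Encoding $+1$ as $U$ and $-1$ as $D$, I would partition these Dyck paths by their number of peaks $k$, using the Narayana number $N(n,k)=\frac{1}{n}\binom{n}{k}\binom{n}{k-1}$, which counts the Dyck paths of semilength $n$ with exactly $k$ peaks.

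The crux is counting, for a fixed underlying Dyck path, the ways to fill in the first $r$ coordinates of each step so that no two consecutive steps coincide. A repeat $[v,v]$ can occur only between consecutive steps with the same last coordinate, that is, inside a maximal run of $U$'s or of $D$'s; at a turn (a peak $UD$ or a valley $DU$) the last coordinates differ and a repeat is impossible. Hence a step whose first $r$ coordinates are genuinely constrained is exactly one that continues a run --- it must differ from its predecessor and so has $2^r-1$ choices --- whereas a step that begins a run (including the very first step of the walk) is unconstrained with $2^r$ choices. Since a Dyck path with $k$ peaks has exactly $2k$ maximal runs ($k$ ascents and $k$ descents), there are $2k$ run-starting steps and $2n-2k$ run-continuing steps, contributing factors $2^{2rk}$ and $(2^r-1)^{2n-2k}$. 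Multiplying by $N(n,k)$ and summing over $k$ gives
\[
f^{(r)}_n = \sum_{k=1}^{n} N(n,k)\,(2^r-1)^{2n-2k}2^{2rk} = \frac{1}{n}\sum_{k=1}^{n}(2^r-1)^{2n-2k}2^{2rk}\binom{n}{k}\binom{n}{k-1},
\]
which is the claimed first formula.

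With $f^{(r)}_n$ now a single sum of hypergeometric terms in $k$, the closed ${}_2F_1$ evaluation follows by computing the term ratio $t_{k+1}/t_k = \frac{2^{2r}}{(2^r-1)^2}\cdot\frac{(n-k)(n-k+1)}{k(k+1)}$ and reading off the standard parameters, with the $k=1$ term supplying the prefactor $2^{2r}(2^r-1)^{2n-2}$; the stated second-order recurrence then comes from applying Zeilberger's algorithm to the summand, exactly as in the Maple code of the Appendix, and the base cases $f^{(r)}_1=2^{2r}$ and $f^{(r)}_2=2^{4r}+2^{2r}(2^r-1)^2$ are verified by direct substitution. I expect the only delicate point to be the run-based classification of steps together with the count of $2k$ runs in a $k$-peak Dyck path; once that bookkeeping is pinned down, the rest is the same routine automated computation already used for Theorems~\ref{thm:B}, \ref{thm:C}, and~\ref{thm:E}.
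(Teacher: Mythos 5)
Your proposal is correct and takes essentially the same route as the paper: project onto the last coordinate to get a Dyck path, partition by the number of peaks $k$ using the Narayana numbers $N(n,k)=\frac{1}{n}\binom{n}{k}\binom{n}{k-1}$, and observe that exactly the steps with $2^r$ free choices are the first step and those directly after a peak or valley (your $2k$ run-starting steps, since $k$ peaks plus $k-1$ valleys plus the first step gives $2k$), while the remaining $2n-2k$ steps have $2^r-1$ choices, after which the ${}_2F_1$ evaluation and the recurrence follow from the term ratio and Zeilberger's algorithm just as in the paper's Appendix. Your explicit run-based bookkeeping and term-ratio computation merely spell out details the paper leaves implicit, and both are correct.
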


\begin{proof}
Recall that $f^{(r)}_n$ counts the number of walks of length $2n$ which avoid the repeat pattern and end with $x_{r+1} = 0$ while staying the half space defined by $x_{r+1} \geq 0$.
In the last coordinate we must have a Dyck path.
We again partition Dyck paths of semilength $n$ by number of peaks given by Narayana numbers
\[N(n,k) = \frac{1}{n} \binom{n}{k}\binom{n}{k-1}\]
for each $1 \leq k \leq n$.
For the first $r$ coordinates with have $2^r$ for only the steps directly after a peak or valley as well as for the first step.
At all other steps we have $(2^r-1)$ choices for the first $r$ coordinates.
Once we have $f^{(r)}_n$ expressed in this way as a sum over hypergeometric terms we may find the hypergeometric evaluation and recurrence using automated methods.
In the appendix we show how this computation can be performed in Maple.
\end{proof}

\begin{example}
For $r=1$ and $n = 4$ consider walks contributing to $f^{(1)}_4$ projecting onto the Dyck path $[{}^2U,{}^1U,{}^1U,{}^2D,{}^2U,{}^2D,{}^1D,{}^1D]$ where $U$ and $D$ are up and down steps respectively and superscripts indicated the number of choices for each step.
There are $2^4 = 16$ such paths because this Dyck path has $2$ peaks and $1$ valley.
We let
\[ \{\pm 1\}^2 = \{NE := (1,1), NW := (-1,1), SE := (1,-1), SW := (-1,-1)\}\]
and accounting for symmetry we may assume $NE$ is the first step.
This results in $8$ walks shown in Figure~\ref{fig:F}.
\end{example}

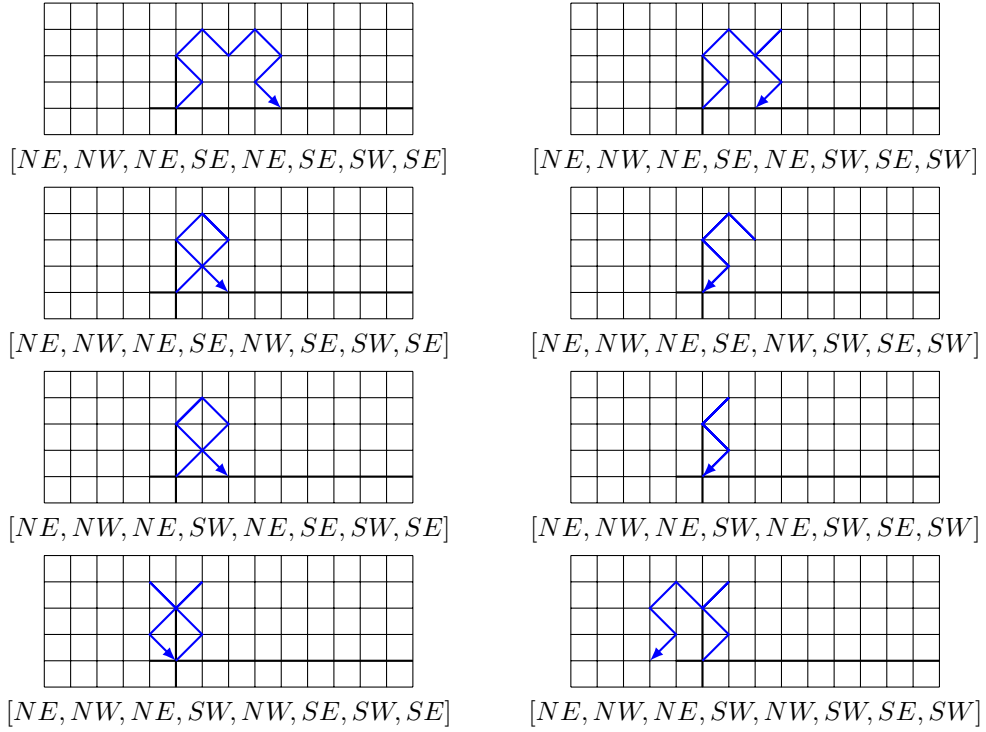
\begin{figure}
    \centering
\begin{tikzpicture}[scale=0.35]

\begin{scope}[shift={(-17.5,0)}]
\draw[step=1] (-5,-1) grid (9,4);
\draw[thick] (-1,0)--(9,0);
\draw[thick] (0,-1)--(0,2);
\draw[thick, blue, -{latex}] (0,0)--(1,1)--(0,2)--(1,3)--(2,2)--(3,3)--(4,2)--(3,1)--(4,0);
\node (x) at (2,-2) {$[NE,NW,NE,SE,NE,SE,SW,SE]$};
\end{scope}

\begin{scope}[shift={(2.5,0)}]
\draw[step=1] (-5,-1) grid (9,4);
\draw[thick] (-1,0)--(9,0);
\draw[thick] (0,-1)--(0,2);
\draw[thick, blue, -{latex}] (0,0)--(1,1)--(0,2)--(1,3)--(2,2)--(3,3)--(2,2)--(3,1)--(2,0);
\node (x) at (2,-2) {$[NE,NW,NE,SE,NE,SW,SE,SW]$};
\end{scope}

\begin{scope}[shift={(-17.5,-7)}]
\draw[step=1] (-5,-1) grid (9,4);
\draw[thick] (-1,0)--(9,0);
\draw[thick] (0,-1)--(0,2);
\draw[thick, blue, -{latex}] (0,0)--(1,1)--(0,2)--(1,3)--(2,2)--(1,3)--(2,2)--(1,1)--(2,0);
\node (x) at (2,-2) {$[NE,NW,NE,SE,NW,SE,SW,SE]$};
\end{scope}

\begin{scope}[shift={(2.5,-7)}]
\draw[step=1] (-5,-1) grid (9,4);
\draw[thick] (-1,0)--(9,0);
\draw[thick] (0,-1)--(0,2);
\draw[thick, blue, -{latex}] (0,0)--(1,1)--(0,2)--(1,3)--(2,2)--(1,3)--(0,2)--(1,1)--(0,0);
\node (x) at (2,-2) {$[NE,NW,NE,SE,NW,SW,SE,SW]$};
\end{scope}

\begin{scope}[shift={(-17.5,-14)}]
\draw[step=1] (-5,-1) grid (9,4);
\draw[thick] (-1,0)--(9,0);
\draw[thick] (0,-1)--(0,2);
\draw[thick, blue, -{latex}] (0,0)--(1,1)--(0,2)--(1,3)--(0,2)--(1,3)--(2,2)--(1,1)--(2,0);
\node (x) at (2,-2) {$[NE,NW,NE,SW,NE,SE,SW,SE]$};
\end{scope}

\begin{scope}[shift={(2.5,-14)}]
\draw[step=1] (-5,-1) grid (9,4);
\draw[thick] (-1,0)--(9,0);
\draw[thick] (0,-1)--(0,2);
\draw[thick, blue, -{latex}] (0,0)--(1,1)--(0,2)--(1,3)--(0,2)--(1,3)--(0,2)--(1,1)--(0,0);
\node (x) at (2,-2) {$[NE,NW,NE,SW,NE,SW,SE,SW]$};
\end{scope}

\begin{scope}[shift={(-17.5,-21)}]
\draw[step=1] (-5,-1) grid (9,4);
\draw[thick] (-1,0)--(9,0);
\draw[thick] (0,-1)--(0,2);
\draw[thick, blue, -{latex}] (0,0)--(1,1)--(0,2)--(1,3)--(0,2)--(-1,3)--(0,2)--(-1,1)--(0,0);
\node (x) at (2,-2) {$[NE,NW,NE,SW,NW,SE,SW,SE]$};
\end{scope}

\begin{scope}[shift={(2.5,-21)}]
\draw[step=1] (-5,-1) grid (9,4);
\draw[thick] (-1,0)--(9,0);
\draw[thick] (0,-1)--(0,2);
\draw[thick, blue, -{latex}] (0,0)--(1,1)--(0,2)--(1,3)--(0,2)--(-1,3)--(-2,2)--(-1,1)--(-2,0);
\node (x) at (2,-2) {$[NE,NW,NE,SW,NW,SW,SE,SW]$};
\end{scope}
\end{tikzpicture}
\caption{Here are $8$ walks from which all $16$ walks in the plane avoiding consecutive steps projecting up the Dyck path $[U,U,U,D,U,D,D,D]$ can be obtained.}
\label{fig:F}
\end{figure}

\section{Generating functions and asymptotics}
\label{sec:gen}

In this section we give a formula for each of the generating functions $G(x; \bB^{(r)})$, $G(x; \bC^{(r)})$, $G(x; \bE^{(r)})$, and $G(x; \bF^{(r)})$.

\begin{lemma}
If $r \geq 0$ and $v = (v',1) \in \calS_r$, then $G(x; \bE^{(r)}(v)) = \frac{1}{2^{r}}\left(G(x; \bE^{(r)}) - 1\right)$.
\label{lem:startE}
\end{lemma}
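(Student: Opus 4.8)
The plan is to exploit a symmetry among the $2^r$ admissible first steps, together with the observation that in the halfspace model the first step is forced to go up. First I would note that any nonempty walk in $\bE^{(r)}$ starts at the origin and must keep $x_{r+1} \geq 0$, so its initial step cannot have last coordinate $-1$; hence every nonempty walk of $\bE^{(r)}$ begins with some $v = (v',1)$ where $v' \in \{\pm 1\}^r$. Since the empty walk is the unique walk contributing the constant term $1$ to $G(x;\bE^{(r)})$, partitioning the nonempty walks according to their first step gives
\[
G(x;\bE^{(r)}) - 1 = \sum_{v' \in \{\pm 1\}^r} G\bigl(x;\bE^{(r)}((v',1))\bigr).
\]

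The heart of the argument is to show that each of the $2^r$ summands is equal. To that end I would introduce, for each $1 \leq i \leq r$, the map $\sigma_i$ that negates the $i$th coordinate of every step vector of a walk, and consider the group generated by these maps, isomorphic to $(\Z/2\Z)^r$. Each $\sigma_i$ is a length-preserving involution on words over $\calS_r$; it fixes the last coordinate of every step, so it preserves both the halfspace condition $x_{r+1} \geq 0$ and the ending condition $x_{r+1} = 0$; and because $\sigma_i$ is linear one has $\sigma_i(-w) = -\sigma_i(w)$, so a pair $[w,-w]$ is carried to $[\sigma_i(w), -\sigma_i(w)]$. Thus backtracking patterns are sent bijectively to backtracking patterns and $\bE^{(r)}$ is preserved. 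The induced action on the prefix $v' \in \{\pm 1\}^r$ of the first step is simply transitive, so for any two admissible first steps $(v'_1,1)$ and $(v'_2,1)$ the appropriate group element restricts to a length-preserving bijection $\bE^{(r)}((v'_1,1)) \to \bE^{(r)}((v'_2,1))$.

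It follows that $G(x;\bE^{(r)}((v',1)))$ is independent of $v'$, so the displayed sum equals $2^r\,G(x;\bE^{(r)}(v))$ for any fixed $v = (v',1)$, and solving for $G(x;\bE^{(r)}(v))$ yields the claimed identity. The only step requiring genuine care is verifying that the coordinate flips preserve backtracking avoidance; this is where the argument would break if the forbidden pattern were not symmetric under $\sigma_i$, but since $\sigma_i(-w) = -\sigma_i(w)$ the set of backtracking patterns is $\sigma_i$-stable and the check is immediate. Everything else is routine bookkeeping of which walk carries the constant term and of how the first step sorts the remaining walks into $2^r$ equal-size classes.
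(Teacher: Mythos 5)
Your proposal is correct and follows essentially the same route as the paper's own proof: the paper introduces the coordinate-negation involutions $\epsilon_i$ (your $\sigma_i$) for $1 \leq i \leq r$, verifies exactly as you do that they preserve the last coordinate (hence the halfspace and ending conditions) and satisfy $\epsilon_i(-u) = -\epsilon_i(u)$ (hence preserve backtracking avoidance), and composes them at the coordinates where two prefixes differ to equate the $2^r$ generating functions before partitioning the nonempty walks by first step. Your group-theoretic framing via the simply transitive action of $(\Z/2\Z)^r$ is a cosmetic repackaging of the same argument, so nothing further is needed.
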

\begin{proof}
Let $\epsilon_i: \calS_r \to \calS_r$ be defined by 
\[(y_1, \dots, y_{i-1}, y_i, y_{i+1} \dots, y_{r+1}) \mapsto (y_1, \dots, y_{i-1}, -y_i, y_{i+1} \dots, y_{r+1}) \]
for each $1 \leq i \leq r$.
It is clear $\epsilon_i$ is an involution, and hence a bijection.
We have that $[v_1, v_2, \dots, v_{2n}] \in \bE^{(r)}$ if and only if $[\epsilon_i(v_1), \epsilon_i(v_2), \dots, \epsilon_i(v_{2n})] \in \bE^{(r)}$.
This is because $v = -u$ if and only if $\epsilon_i(v) = - \epsilon_i(u)$ and because the sum of the last coordinate is preserved.
Now given any $v = (v', 1) \in \calS_r$ we can obtain any other $u = (u',1) \in \calS_r$ by $\epsilon_{i_\ell} \circ \epsilon_{i_{\ell-1}} \circ \cdots \circ \epsilon_{i_1}$ for any some sequence $i_1, i_2, \dots, i_{\ell}$ (i.e. the coordinates where $v$ and $u$ differ).
It follows that $G(x; \bE^{(r)}(v)) = G(x; \bE^{(r)}(u))$.
The lemma follows since
\[G(x; \bE^{(r)})-1 = \sum_{v = (v',1) \in \calS_r} G(x; \bE^{(r)}(v))\]
as all nonempty walks must start with some $v = (v',1) \in \calS_r$ while $\bE^{(r)}(v) \cap \bE^{(r)}(u) = \varnothing$ for $v \neq u$.
\end{proof}

\begin{lemma}
If $r \geq 0$ and $v \in \calS_r$, then $G(x; \bB^{(r)}(v)) = \frac{1}{2^{r+1}}\left(G(x; \bB^{(r)}) - 1\right)$.\label{lem:startB}
\end{lemma}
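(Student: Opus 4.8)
The plan is to mirror the proof of Lemma~\ref{lem:startE}, exploiting coordinate-flipping symmetries of $\bB^{(r)}$ to show that every starting vector $v \in \calS_r$ is equally represented. The crucial difference from the $\bE^{(r)}$ case is that $\bB^{(r)}$ carries no halfspace constraint, so we may negate \emph{all} $r+1$ coordinates rather than only the first $r$; this is precisely what turns the normalizing factor $\frac{1}{2^{r}}$ into $\frac{1}{2^{r+1}}$.

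First I would reuse the involutions $\epsilon_i : \calS_r \to \calS_r$ that negate the $i$-th coordinate, now for every $1 \leq i \leq r+1$ rather than only $1 \leq i \leq r$. Each $\epsilon_i$ is a self-inverse bijection of the alphabet and extends letterwise to words via $[v_1, \dots, v_{2n}] \mapsto [\epsilon_i(v_1), \dots, \epsilon_i(v_{2n})]$. The key claim is that this extension restricts to a bijection of $\bB^{(r)}$. As in Lemma~\ref{lem:startE}, avoiding backtracking is preserved because $w = -u$ if and only if $\epsilon_i(w) = -\epsilon_i(u)$, so a forbidden pattern $[w, -w]$ is carried to another forbidden pattern.

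The one genuinely new verification is the ending condition $x_{r+1} = 0$ for the case $i = r+1$. For $1 \leq i \leq r$ the last coordinate is untouched, so the terminal value in direction $x_{r+1}$ is unchanged, exactly as before. For $i = r+1$, applying $\epsilon_{r+1}$ negates every last coordinate and hence negates the terminal sum in direction $x_{r+1}$; since the walks in $\bB^{(r)}$ are precisely those with terminal sum $0$, and $0 = -0$, membership is preserved. This is the step unavailable for $\bE^{(r)}$ (where flipping the last coordinate would violate $x_{r+1} \geq 0$) and the only place where the argument truly differs.

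With these bijections in hand, any two vectors $v, u \in \calS_r$ differ in some set of coordinates, and composing the corresponding $\epsilon_i$'s sends $\bB^{(r)}(v)$ bijectively onto $\bB^{(r)}(u)$, so that $G(x; \bB^{(r)}(v)) = G(x; \bB^{(r)}(u))$. Since every nonempty walk begins with exactly one of the $2^{r+1}$ vectors in $\calS_r$ and the languages $\bB^{(r)}(v)$ are pairwise disjoint, summing yields $G(x; \bB^{(r)}) - 1 = 2^{r+1} G(x; \bB^{(r)}(v))$, which rearranges to the claim. I expect no serious obstacle: the argument is essentially routine once one observes that dropping the halfspace constraint licenses the extra symmetry $\epsilon_{r+1}$.
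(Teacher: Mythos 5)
Your proof is correct and takes essentially the same approach as the paper, which dispatches this lemma by noting it mirrors Lemma~\ref{lem:startE} with the additional involution $\epsilon_{r+1}$ now available since $\bB^{(r)}$ has no halfspace constraint. Your explicit check that $\epsilon_{r+1}$ preserves the terminal condition $x_{r+1}=0$ (because $0=-0$, unlike the constraint $x_{r+1}\geq 0$ for $\bE^{(r)}$) is precisely the detail the paper leaves implicit.
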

\begin{proof}
The proof is very similar to the proof of Lemma~\ref{lem:startE}.
The only essential difference is that a walk in $\bB^{(r)}$ can begin with any $v \in \calS_r$ and hence we are also allowed the use the similarly defined $\epsilon_{r+1}: \calS_r \to \calS_r$.
\end{proof}

\begin{lemma}
For any $r, n \geq 1$ we have that $2^r b^{(r)}_n = (2^r-1)c^{(r)}_n$ and $2^re^{(r)}_n = (2^r-1)f^{(r)}_n$.
\label{lem:relseq}
\end{lemma}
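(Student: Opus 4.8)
The plan is to read off from Theorems~\ref{thm:B} and~\ref{thm:C} that $b^{(r)}_n$ and $c^{(r)}_n$ satisfy \emph{the same} polynomial recurrence, and likewise from Theorems~\ref{thm:E} and~\ref{thm:F} that $e^{(r)}_n$ and $f^{(r)}_n$ satisfy the same recurrence; each identity then collapses to checking the first two terms. For the first claim I would consider the difference sequence $g_n := 2^r b^{(r)}_n - (2^r-1)c^{(r)}_n$. Since $b^{(r)}_n$ and $c^{(r)}_n$ both obey
\[ n\,x_n = 2\left((2^{2r+1}-2^{r+1}+1)(n-1) + 2^{2r}-2^r\right)x_{n-1} - (2^{r+1}-1)^2(n-2)x_{n-2} \quad (n \geq 3), \]
the sequence $g_n$ obeys it as well by linearity.

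Next I would verify the base cases $g_1 = g_2 = 0$ from the initial values recorded in the two theorems: one computes $2^r b^{(r)}_1 = 2^{2r+1}(2^r-1) = (2^r-1)c^{(r)}_1$, and a short expansion shows that both $2^r b^{(r)}_2$ and $(2^r-1)c^{(r)}_2$ equal $2^{4r+1}(2^r-1)+2^{3r+1}(2^r-1)^2+2^{2r+1}(2^r-1)^3$. Because the leading coefficient $n$ of the recurrence is nonzero for all $n \geq 3$, the value $g_n$ is determined by $g_{n-1}$ and $g_{n-2}$, so $g_1=g_2=0$ forces $g_n=0$ for every $n\ge 1$ by induction, giving $2^r b^{(r)}_n = (2^r-1)c^{(r)}_n$. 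The identity $2^r e^{(r)}_n = (2^r-1)f^{(r)}_n$ is proved the same way: $e^{(r)}_n$ and $f^{(r)}_n$ share the recurrence
\[ (n+1)\,x_n = (2^{2r+1}-2^{r+1}+1)(2n-1)\,x_{n-1} - (2^{r+1}-1)^2(n-2)\,x_{n-2} \quad (n \geq 3), \]
whose leading coefficient $n+1$ is again nonzero, so setting $h_n := 2^r e^{(r)}_n - (2^r-1)f^{(r)}_n$ and checking $h_1 = 2^{2r}(2^r-1)-2^{2r}(2^r-1) = 0$ together with $h_2 = 0$ finishes the argument.

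I do not expect a genuine obstacle, since the real content is already packaged into the coincidence of the two recurrences produced by the computations behind Theorems~\ref{thm:B}--\ref{thm:F}; the only points needing care are that the recurrences within each pair are \emph{literally} identical (so the difference sequence inherits them) and that \emph{both} base cases $n=1,2$ are checked, as the recurrences are second order and valid only for $n\ge 3$.

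As a self-contained alternative avoiding the recurrences, I would instead compare the explicit sums directly. Reindexing the sum for $c^{(r)}_n$ by $k \mapsto n+1-k$ sends $\binom{n-1}{k-1}\mapsto\binom{n-1}{k-1}$ and $\binom{n-1}{k-2}\mapsto\binom{n-1}{k}$ and exactly swaps the exponents of $2^r$ and $2^r-1$, so that $2^r b^{(r)}_n - (2^r-1)c^{(r)}_n$ becomes a single sum whose ``diagonal'' terms cancel, leaving $2\sum_k\bigl(T_b(k)-T_c(k)\bigr)$ with $T_b(k)=(2^r-1)^{2k-2}2^{r(2n-2k+3)}\binom{n-1}{k-1}\binom{n-1}{k-2}$ and $T_c(k)=(2^r-1)^{2k}2^{r(2n-2k+1)}\binom{n-1}{k-1}\binom{n-1}{k}$; a further shift $k\mapsto k+1$ identifies $\sum_k T_b(k)$ with $\sum_k T_c(k)$, so the total vanishes. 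The same reindexing settles the $e^{(r)}_n$, $f^{(r)}_n$ pair even more cleanly: because the factor $\binom{n}{k}\binom{n}{k-1}$ is invariant under $k \mapsto n+1-k$, the reindexing alone yields a termwise identity with no extra shift.
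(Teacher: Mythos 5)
Your primary argument is exactly the paper's proof: both observe that Theorems~\ref{thm:B}--\ref{thm:F} give literally identical second-order recurrences within each pair and then verify the $n=1,2$ initial values, which you compute correctly (the common value $2^{4r+1}(2^r-1)+2^{3r+1}(2^r-1)^2+2^{2r+1}(2^r-1)^3$ for $2^r b^{(r)}_2=(2^r-1)c^{(r)}_2$ checks out, as does $h_1=h_2=0$). You are slightly more careful than the paper in spelling out why this suffices --- the difference sequence inherits the recurrence by linearity, and the nonvanishing leading coefficients $n$ and $n+1$ make the induction go through --- but that is a matter of exposition, not substance.

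Your alternative argument, however, is a genuinely different route that does not appear in the paper, and it is correct: under $k\mapsto n+1-k$ one has $\binom{n-1}{k-1}\mapsto\binom{n-1}{k-1}$ and $\binom{n-1}{k-2}\mapsto\binom{n-1}{k}$, the cross terms with $\binom{n-1}{k-1}^2$ acquire the common coefficient $2^r(2^r-1)$ and cancel, and the shift $k\mapsto k+1$ matches $T_c(k-1)=T_b(k)$ termwise, with the boundary terms $\binom{n-1}{-1}=0$ and $\binom{n-1}{n}=0$ taking care of the endpoints; for the $e,f$ pair the invariance of $\binom{n}{k}\binom{n}{k-1}$ under $k\mapsto n+1-k$ indeed gives the identity with no shift at all. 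What this buys is independence from the automated machinery: the paper's recurrences are outputs of Zeilberger's algorithm certified by Maple code in the Appendix, so your first proof inherits that dependence, whereas the reindexing proof establishes the lemma directly from the combinatorial sums in Theorems~\ref{thm:B}--\ref{thm:F} and could serve as a hand-checkable consistency check on the computer algebra. The trade-off is that the recurrence route generalizes mechanically whenever two holonomic sequences are suspected proportional, while the bijective-looking sum manipulation is tailored to these particular binomial expressions.
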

\begin{proof}
Using Theorem~\ref{thm:B}, Theorem~\ref{thm:C}, Theorem~\ref{thm:E}, and Theorem~\ref{thm:F} we see that $\{b^{(r)}_n\}$ satisfies the same recurrence as $\{c^{(r)}_n\}$ while $\{e^{(r)}_n\}$ satisfies the same recurrence as $\{f^{(r)}_n\}$.
It remains only to check initial conditions.
We find that
\[2^r b^{(r)}_1 = 2^r(2^{r+1}(2^r - 1)) = (2^r-1)2^{2r+1} = (2^r-1)c^{(r)}_1\]
and also
\begin{align*}
2^r b^{(r)}_2 &= 2^r(2^{3r+1}(2^r-1) + 2^{2r+1}(2^r-1)^2 + 2^{r+1}(2^r-1)^3)\\
&= (2^r-1)(2^{4r+1} + 2^{3r+1}(2^r-1) + 2^{2r+1}(2^r-1)^2)\\
&= (2^r-1)c^{(r)}_2
\end{align*}
so the lemma is proven for $b^{(r)}_n$ and $c^{(r)}_n$.
In a similar way
\[2^re^{(r)}_1 = 2^r(2^r(2^r - 1)) = (2^r-1)(2^{2r}) = (2^r-1) f^{(r)}_1\]
and
\[2^r e^{(r)}_2 = 2^r(2^{3r}(2^r-1) + 2^r(2^r-1)^3) = (2^r-1)(2^{4r} + 2^{2r}(2^r-1)^2) = (2^r-1) f^{(r)}_2\]
which completes the proof of the lemma.
\end{proof}

\begin{theorem}
For any $r > 0$ we have
\begin{align*}
    G(x; \bB^{(r)}) &= \sqrt{\frac{1-x}{1-(2^{r+1}-1)^2x}}\\
    G(x; \bC^{(r)}) &=  \frac{2^r\sqrt{1-x} - \sqrt{1- (2^{r+1}-1)^2 x}}{ (2^r-1)\sqrt{1-(2^{r+1}-1)^2 x}}\\
    G(x; \bE^{(r)}) &= \frac{1 - x - \sqrt{\big(1-(2^{r+1}-1)^2x\big)\left(1-x\right)}}{2^{r+1}(2^r-1)x}\\
    G(x; \bF^{(r)}) &= \frac{1 - (2^{r+1}-1)x - \sqrt{\big(1-(2^{r+1}-1)^2x\big)(1-x)}}{2(2^r-1)^2x}
\end{align*}
while for $r = 0$ we have
\begin{align*}
    G(x; \bC^{(0)}) &= \frac{1+x}{1-x} &G(x; \bF^{(0)}) &= \frac{1}{1-x}
\end{align*}
and $G(x; \bB^{(0)}) = G(x; \bE^{(0)}) = 1$.
\label{thm:GF}
\end{theorem}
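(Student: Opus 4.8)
The plan is to single out $G(x;\bB^{(r)})$ and $G(x;\bE^{(r)})$ as the two genuinely new computations, to obtain $G(x;\bC^{(r)})$ and $G(x;\bF^{(r)})$ from them for free using Lemma~\ref{lem:relseq}, and to treat $r=0$ by hand. Throughout I abbreviate $s = 2^r$. For the two base cases I will not try to guess the generating function from scratch; instead I exploit that the recurrences in Theorems~\ref{thm:B} and~\ref{thm:E} have coefficients that are \emph{linear} in $n$. Under the dictionary $n \mapsto x\frac{d}{dx}$, such a depth-two recurrence is equivalent to a first-order linear ODE with polynomial coefficients, so my task reduces to checking that each proposed closed form is annihilated by the corresponding operator and then matching finitely many initial coefficients.

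Concretely, for $G(x;\bB^{(r)})$ I would verify that the claimed $\sqrt{(1-x)/(1-(2^{r+1}-1)^2x)}$ satisfies
\[
(1-x)\bigl(1-(2^{r+1}-1)^2x\bigr)\,\frac{d}{dx}G(x;\bB^{(r)}) = 2^r(2^r-1)\,G(x;\bB^{(r)}),
\]
which is an immediate logarithmic-derivative computation. Extracting the coefficient of $x^n$ from this identity reproduces exactly the recurrence of Theorem~\ref{thm:B} for $n \geq 3$, since the leading polynomial expands as $1 - \bigl((2^{r+1}-1)^2+1\bigr)x + (2^{r+1}-1)^2x^2$ and $(2^{r+1}-1)^2+1 = 2(2^{2r+1}-2^{r+1}+1)$. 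As the leading coefficient is invertible as a formal power series (it equals $1$ at $x=0$), the ODE together with the value $b^{(r)}_0 = 1$ determines all coefficients, and in fact forces the stated $b^{(r)}_1$ and $b^{(r)}_2$; so the square root, which has constant term $1$, is the unique solution and equals $G(x;\bB^{(r)})$.

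The case $G(x;\bE^{(r)})$ runs along the same lines from the recurrence of Theorem~\ref{thm:E}, and here lies the main obstacle. Writing $R = \sqrt{(1-(2^{r+1}-1)^2x)(1-x)}$, the proposed form $\frac{1-x-R}{2^{r+1}(2^r-1)x}$ carries an overall factor $1/x$, which manifests in the ODE as a leading coefficient $x(1-x)(1-(2^{r+1}-1)^2x)$ that \emph{vanishes} at the origin; equivalently the operator $n \mapsto x\frac{d}{dx}$ produces an inhomogeneous correction supported in low degree. I would therefore argue by uniqueness of the recurrence rather than by invertibility at $0$: show that $\frac{1-x-R}{2^{r+1}(2^r-1)x}$ is a genuine power series with constant term $1$ (the expansion $R = 1 - \tfrac{1}{2}\bigl((2^{r+1}-1)^2+1\bigr)x + \cdots$ makes the numerator vanish to first order, and $\tfrac{1}{2}\bigl((2^{r+1}-1)^2+1\bigr)-1 = 2^{r+1}(2^r-1)$ matches the denominator), that it is annihilated by the homogeneous part of the operator so that its coefficients obey the Theorem~\ref{thm:E} recurrence for $n \geq 3$, and that its first three coefficients equal $e^{(r)}_0=1$, $e^{(r)}_1$, $e^{(r)}_2$. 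Since a depth-two recurrence with nonvanishing leading coefficient $n+1$ together with three consecutive initial values determines the whole sequence, this identifies the closed form with $G(x;\bE^{(r)})$; the careful part is tracking the low-degree correction so that one really recovers the recurrence precisely for $n \geq 3$.

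Finally, $G(x;\bC^{(r)})$ and $G(x;\bF^{(r)})$ require no new analysis. By Lemma~\ref{lem:relseq} we have $2^r b^{(r)}_n = (2^r-1)c^{(r)}_n$ for $n \geq 1$, while $c^{(r)}_0 = b^{(r)}_0 = 1$ because only the empty walk has length $0$; summing gives
\[
G(x;\bC^{(r)}) = 1 + \frac{2^r}{2^r-1}\bigl(G(x;\bB^{(r)})-1\bigr) = \frac{2^r\,G(x;\bB^{(r)})-1}{2^r-1},
\]
and substituting the formula for $G(x;\bB^{(r)})$ yields the stated expression; the identical manipulation with $e,f$ in place of $b,c$ produces $G(x;\bF^{(r)})$ from $G(x;\bE^{(r)})$. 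For $r=0$ the alphabet is $\{\pm 1\}$: avoiding backtracking forces every step to agree, so the only walk ending at $x_1=0$ is empty and $G(x;\bB^{(0)})=G(x;\bE^{(0)})=1$; avoiding repeats forces strict alternation, leaving the two walks $[+1,-1]^n$ and $[-1,+1]^n$ of each positive length $2n$ (whence $G(x;\bC^{(0)})=1+2\sum_{n\geq 1}x^n=\frac{1+x}{1-x}$) and, once the constraint $x_1 \geq 0$ is imposed, only $[+1,-1]^n$ survives (whence $G(x;\bF^{(0)})=\frac{1}{1-x}$).
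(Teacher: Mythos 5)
Your proposal is correct in substance but takes a genuinely different route from the paper for the two core computations. The paper derives $G(x;\bE^{(r)})$ combinatorially: it splits a walk at its first return to $x_{r+1}=0$, uses Lemma~\ref{lem:startE} to express the generating function of walks with one forbidden first step as $\frac{2^r-1}{2^r}(G-1)$, arrives at a quadratic equation in $G-1$, and solves it with the appropriate choice of sign; $G(x;\bB^{(r)})$ then comes from a second first-return decomposition (whose elevated portion lies in $\bE^{(r)}$) together with Lemma~\ref{lem:startB}, giving an equation \emph{linear} in $H=G(x;\bB^{(r)})$ once $G$ is known. You instead treat the stated closed forms as certificates to be verified against the holonomic recurrences already established in Theorems~\ref{thm:B} and~\ref{thm:E}: since those recurrences have leading coefficients $n$ and $n+1$ that never vanish, checking the equivalent first-order ODE plus the finitely many initial values $n=0,1,2$ pins down the sequence by induction. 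This is legitimate and non-circular (Theorems~\ref{thm:B} and~\ref{thm:E} precede Theorem~\ref{thm:GF} and do not depend on it), and it bypasses Lemmas~\ref{lem:startE} and~\ref{lem:startB} entirely. What each approach buys: the paper's decomposition actually \emph{derives} the generating functions and explains the branch choice of the square root, independently of the computer-assisted recurrences; your route is a shorter verification, but it presupposes the answer and inherits its validity from the Zeilberger-produced recurrences, and (as you rightly flag) the $1/x$ factor in the $\bE^{(r)}$ case forces you to track low-degree inhomogeneous corrections rather than invoke invertibility at the origin. Your treatment of $G(x;\bC^{(r)})$ and $G(x;\bF^{(r)})$ via Lemma~\ref{lem:relseq}, and of $r=0$ by direct counting, coincides with the paper's.

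One correction: your displayed ODE for $\bB^{(r)}$ should read
\[
(1-x)\bigl(1-(2^{r+1}-1)^2x\bigr)\,\frac{d}{dx}G(x;\bB^{(r)}) = 2^{r+1}(2^r-1)\,G(x;\bB^{(r)}),
\]
since the logarithmic derivative yields the constant $\tfrac{1}{2}\bigl((2^{r+1}-1)^2-1\bigr) = 2^{r+1}(2^r-1)$, which matches the constant $2(2^{2r}-2^r)$ appearing in the recurrence of Theorem~\ref{thm:B}; your stated $2^r(2^r-1)$ is off by a factor of $2$, and with it the claimed verification would fail as written. This is a slip rather than a gap: your own expansion in the $\bE^{(r)}$ case already produces the correct value $2^{r+1}(2^r-1)$, and with the corrected constant the whole argument goes through exactly as you describe.
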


\begin{proof}
For $r=0$ the result is easy as there are no nonempty walks avoiding backtracking ending at the origin while $[+1,-1]^n$ and $[-1,+1]^n$ are the only walks avoiding consecutive steps ending at the origin with only the former confined to the nonnegative integers.
To establish the generating function identity for $\bE^{(r)}$ with $r > 1$ we split the walk where it first returns to $x_{r+1} = 0$. We note any walk in $\bE^{(r)}$ is:
\begin{enumerate}
    \item[(i)] empty,
    \item[(ii)] or of the form $[(u,1), w_1, (v,-1), w_2]$ such that $u,v \in \calS_{r-1}$ and $w_1, w_2 \in \bE^{(r)}$ where $w_1$ is nonempty and $w_2$ does not begin with $(-v,1)$,
    \item[(iii)] or of the form $[(u,1), (v,-1), w]$ such that $u, v \in \calS_{r-1}$ with $v \neq -u$ and $w \in \bE^{(r)}$ where $w$ does not begin with $(-v,1)$.
\end{enumerate}
Making use of Lemma~\ref{lem:startE} it follows that $G = G(x; \bE^{(r)})$ satisfies
\[G = 1 + 2^{2r}x(G-1)\left(1 + \frac{2^r-1}{2^r}\left(G-1\right)\right) + 2^r(2^r-1)x\left(1 + \frac{2^r-1}{2^r}\left(G-1\right)\right)\]
which is equivalent to quadratic equation
\[0 = 2^r(2^r-1)x(G-1)^2 + ((2^{2r} + (2^r-1)^2)x -1 )(G-1) + 2^r(2^r-1)x\]
that we can solve.
For $r > 0$ using the quadratic formula and making the correct choice of sign we have
\begin{align*}
    G - 1 &= \frac{1 - (2^{2r} + (2^r-1)^2)x - \sqrt{\left(\left(2^{2r}-(2^r-1)^2\right)^2x - 1\right)(x-1)}}{2^{r+1}(2^r-1)x}\\
        &= \frac{1 - (2^{2r} + (2^r-1)^2)x - \sqrt{\big(1-(2^{r+1}-1)^2x\big)(1-x)}}{2^{r+1}(2^r-1)x}
\end{align*}
which completes the proof for $G = G(x; \bE^{(r)})$ after adding $1$ to each side.
By Lemma~\ref{lem:relseq} it follows that
\[G(x;\bF^{(r)}) = 1 + \frac{2^r}{2^r-1}\left(G(x; \bE^{(r)})-1\right)\]
which can be used to conclude the theorem for $G(x; \bF^{(r)})$.

Now consider let's consider walks in $\bB^{(r)}$ which we again split according to where they first return to $x_{r+1} = 0$.
Such a walk must be:
\begin{enumerate}
    \item[(i)] empty,
    \item[(ii)] of the form $[(u,\pm 1), w_1, (v,\mp 1), w_2]$ such that $u, v \in \calS_{r-1}$, $\pm w_1 \in \bE^{(r)}$ is nonempty where $\pm 1$ is the last coordinate of the first step of the walk, and $w_2 \in \bB^{(r)}$ which does not start with $(-v, \pm 1)$
    \item[(iii)] of the form $[(u,\pm 1), (v,\mp 1), w]$ such that $u, v \in \calS_{r-1}$ with $v \neq -u$ and $w \in \bB^{(r)}$ which does not start with $(-v,\pm 1)$.
\end{enumerate}
Letting $H = G(x; \bB^{(r)})$ while still letting $G = G(x; \bE^{(r)})$ we have that
\[H = 1 + 2^{2r+1} x (G-1) \left(1 + \frac{2^{r+1}-1}{2^{r+1}} \left(H-1\right)\right) + 2^{r+1}(2^r-1)x\left(1 + \frac{2^{r+1}-1}{2^{r+1}} \left(H-1\right)\right)\]
where we have made use of Lemma~\ref{lem:startB}.
After rearranging we find
\[(1 + (2^{r+1} - 1)x - 2^r(2^{r+1}-1)G) \cdot H = 1 - x - 2^rxG\]
then solving for $H$ and substituting the formula we have previously found for $G$ we have
\begin{align*}
    H &= \frac{(2^{r+1}-1) - (2^{r+1}-1)x - \sqrt{(1 - (2^{r+1}-1)^2x)(1-x)}}{-1 + (2^{r+1}-1)^2x + (2^{r+1}-1)  \sqrt{(1 - (2^{r+1}-1)^2x)(1-x)}}\\
    &= \frac{\sqrt{1-x} \left((2^{r+1}-1)\sqrt{1-x} - \sqrt{1 - (2^{r+1}-1)^2x}\right)}{\sqrt{1 - (2^{r+1}-1)^2x} \left( - \sqrt{1 - (2^{r+1}-1)^2x} + (2^{r+1}-1)\sqrt{1-x}\right)}
\end{align*}
which derives the formula for $H = G(x; \bB^{(r)})$.
Last it remains to demonstrate the formula for $H = G(x; \bC^{(r)})$ and this can be readily done using Lemma~\ref{lem:relseq}.
\end{proof}

\begin{corollary}
For any $r \geq 1$, we have
\begin{align*}
    b^{(r)}_n &\sim (2^{r+1}-1)^{2n-1} \cdot \sqrt{\frac{(2^{r+1}-1)^2-1}{\pi n}} \\
    c^{(r)}_n &\sim \frac{2^r (2^{r+1}-1)^{2n-1}}{2^r-1} \cdot \sqrt{\frac{(2^{r+1}-1)^2-1}{\pi n}} \\
    e^{(r)}_n &\sim \frac{(2^{r+1}-1)^{2n+1}}{2^{r+2}(2^r-1)} \cdot \sqrt{\frac{(2^{r+1}-1)^2 - 1}{\pi n^3}} \\
    f^{(r)}_n &\sim \frac{(2^{r+1}-1)^{2n+1}}{2^{2}(2^r-1)^2} \cdot \sqrt{\frac{(2^{r+1}-1)^2 - 1}{\pi n^3}}
\end{align*}
as asymptotics for our sequences.
\label{cor:asym}
\end{corollary}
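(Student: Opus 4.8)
The plan is to apply singularity analysis, in the form of the transfer theorem of~\cite{analytic} recalled in Section~\ref{sec:setting}, to the closed forms for the generating functions obtained in Theorem~\ref{thm:GF}, and then to deduce the asymptotics for $c^{(r)}_n$ and $f^{(r)}_n$ from those for $b^{(r)}_n$ and $e^{(r)}_n$ using the exact proportionalities of Lemma~\ref{lem:relseq}. First I would locate the dominant singularity. Every generating function in Theorem~\ref{thm:GF} is built from $\sqrt{1-x}$ and $\sqrt{1-(2^{r+1}-1)^2 x}$, whose branch points sit at $x = 1$ and at $\rho := (2^{r+1}-1)^{-2}$. Since $r \geq 1$ forces $2^{r+1}-1 \geq 3$ and hence $(2^{r+1}-1)^2 > 1$, the point $\rho$ is the unique singularity closest to the origin, so $\rho^{-n} = (2^{r+1}-1)^{2n}$ supplies the exponential growth. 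The computation is streamlined by the identity $1 - (2^{r+1}-1)^2 x = 1 - x/\rho$, together with $1 - \rho = \big((2^{r+1}-1)^2-1\big)/(2^{r+1}-1)^2$.

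For $\bB^{(r)}$ I would substitute into $G(x;\bB^{(r)}) = \sqrt{(1-x)/(1-x/\rho)}$ to read off the local form $G(x;\bB^{(r)}) \sim \sqrt{1-\rho}\,(1-x/\rho)^{-1/2}$ as $x \to \rho$, so that $\alpha = 1/2$ and $C = \sqrt{1-\rho}$. With $\Gamma(1/2) = \sqrt{\pi}$ the transfer theorem gives $b^{(r)}_n \sim \sqrt{1-\rho}\,\rho^{-n} n^{-1/2}/\sqrt{\pi}$, which reduces to the stated expression after inserting the values of $\rho$ and $1-\rho$. The sequence $c^{(r)}_n$ then follows immediately from $c^{(r)}_n = \tfrac{2^r}{2^r-1}b^{(r)}_n$ (Lemma~\ref{lem:relseq}).

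The case of $\bE^{(r)}$ is the one requiring care, and I expect it to be the main obstacle. Here $G(x;\bE^{(r)})$ stays finite at $\rho$, so instead of a blow-up one has a square-root singular term riding on an analytic part: writing
\[
G(x;\bE^{(r)}) = \frac{1-x}{2^{r+1}(2^r-1)x} \;-\; \frac{\sqrt{1-x}}{2^{r+1}(2^r-1)x}\,\sqrt{1-x/\rho},
\]
the first summand is rational with its only pole at $0$, hence contributes nothing to $[x^n]$ for $n \geq 1$, while the second is $(\text{analytic and nonzero at }\rho)\cdot(1-x/\rho)^{1/2}$. Thus the relevant local data is $\alpha = -1/2$ and $C = -\sqrt{1-\rho}/\big(2^{r+1}(2^r-1)\rho\big)$, and applying the transfer theorem with $\Gamma(-1/2) = -2\sqrt{\pi}$ yields $e^{(r)}_n \sim C\rho^{-n} n^{-3/2}/\Gamma(-1/2)$. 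The two sign factors cancel and, after substituting $\rho$ and $1-\rho$, this reduces to the claimed $n^{-3/2}$ asymptotic. Finally $f^{(r)}_n = \tfrac{2^r}{2^r-1}e^{(r)}_n$ from Lemma~\ref{lem:relseq} produces the last line.

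The only genuine subtlety is justifying that the rational part may be discarded and that the transfer theorem is being applied with a negative half-integer exponent; both are standard within the framework of~\cite{analytic}, but I would state them explicitly, since the simplified version recalled in Section~\ref{sec:setting} is phrased for a $G$ that actually diverges at $\rho$ and so does not literally cover the $\bE^{(r)}$ and $\bF^{(r)}$ cases. Everything else is routine algebraic simplification, and I would record the two proportionality reductions once rather than repeating the full singularity analysis for $c^{(r)}_n$ and $f^{(r)}_n$.
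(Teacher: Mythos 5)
Your proposal is correct and takes essentially the same approach as the paper: both locate the dominant singularity at $\rho=(2^{r+1}-1)^{-2}$, apply the transfer theorem with $\alpha=\tfrac12$ (and $\Gamma(\tfrac12)=\sqrt{\pi}$) for $b^{(r)}_n$ and with $\alpha=-\tfrac12$ (and $\Gamma(-\tfrac12)=-2\sqrt{\pi}$) for $e^{(r)}_n$, then obtain $c^{(r)}_n$ and $f^{(r)}_n$ from the proportionalities of Lemma~\ref{lem:relseq}. Your explicit split of $G(x;\bE^{(r)})$ into a rational part plus an analytic multiple of $(1-x/\rho)^{1/2}$ merely makes precise what the paper's proof does implicitly when it expands the generating function near $\rho$, and your singular coefficient $C=-\sqrt{1-\rho}\big/\bigl(2^{r+1}(2^r-1)\rho\bigr)$ agrees with the paper's.
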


\begin{proof}
By Lemma~\ref{lem:relseq} we need only compute the asymptotics for $b^{(r)}_n$ and $e^{(r)}_n$.
For $b^{(r)}_n$ we have the generating function
\[G(z; \bB^{(r)}) = \sqrt{\frac{1-z}{1-(2^{r+1}-1)^2z}}\]
by Theorem~\ref{thm:GF} which has $\rho = \frac{1}{(2^{r+1}-1)^2}$ as its singularity closest to the origin.
As $z \to \rho$ we have that the generating function approaches
\[\sqrt{\frac{(2^{r+1}-1)^2 - 1}{(2^{r+1}-1)^2}} \cdot (1 - (2^{r+1}-1)^2x)^{-\alpha}\]
as a complex analytic function with $\alpha = \frac{1}{2}$.
It then follows that
\[b^{(r)}_n \sim \sqrt{\frac{(2^{r+1}-1)^2 - 1}{(2^{r+1}-1)^2}} \cdot \rho^{-n} \cdot \frac{n^{\alpha-1}}{\Gamma(\alpha)}\]
which simplifies to the desired formula since $\Gamma(\alpha) = \sqrt{\pi}$.

Now for $e^{(r)}_n$ we have the generating function
\[G(z; \bE^{(r)}) = \frac{1 - z - \sqrt{\big(1-(2^{r+1}-1)^2z\big)\left(1-z\right)}}{2^{r+1}(2^r-1)z}\]
by Theorem~\ref{thm:GF} which also has $\rho = \frac{1}{(2^{r+1}-1)^2}$ as its singularity closest to the origin.
As $z \to \rho$ we have that the generating function approaches
\[\frac{(2^{r+1}-1)^2}{2^{r+1}(2^r-1)} - \frac{1}{2^{r+1}(2^r-1)} - \frac{(2^{r+1}-1)^2}{2^{r+1}(2^r-1)} \cdot \sqrt{\frac{(2^{r+1}-1)^2 - 1}{(2^{r+1}-1)^2}} \cdot (1 - (2^{r+1}-1)^2x)^{-\alpha}\]
as a complex analytic function with $\alpha = -\frac{1}{2}$.
It then follows that
\[e^{(r)}_n \sim -\frac{(2^{r+1}-1)^2}{2^{r+1}(2^r-1)} \cdot \sqrt{\frac{(2^{r+1}-1)^2 - 1}{(2^{r+1}-1)^2}} \cdot \rho^{-n} \cdot \frac{n^{\alpha-1}}{\Gamma(\alpha)}\]
which simplifies to the desired formula since $\Gamma(\alpha) = - 2 \sqrt{\pi}$.
\end{proof}

For comparison to the expressions in Corollary~\ref{cor:asym} one can see that
\begin{align*}
    a^{(r)}_n &\sim (2^{2r+2})^n \cdot \frac{1}{\sqrt{\pi n}} & d^{(r)}_n &\sim (2^{2r+2})^n \cdot \frac{1}{\sqrt{\pi n^3}}
\end{align*}
which can be gotten from either the formula for coefficients in Equations~(\ref{eq:a}) and (\ref{eq:d}) or the generating functions in Equations~(\ref{eq:genA}) and (\ref{eq:genD}).

\section{Conclusion}
\label{sec:conclusion}
We now conclude with some possible directions for future work and discussion of related work.

\subsection{Intersections of hyperplanes}
Let us look at a natural generalization of the languages $\bA^{(r)}$ and $\bD^{(r)}$.
For $0 \leq j \leq r$ We can consider the language $\bA^{(r)}(j)$ of walks in $\Z^{r+1}$ with steps from $\calS_r$ which end with $x_{r-j+1} = x_{r-j+2} = \cdots = x_{r + 1} = 0$.
So, we have that $\bA^{(r)} = \bA^{(r)}(0)$.
We also take the language $\bD^{(r)}(j)$ which is contained in $\bA^{(r)}$ with the additional condition that $x_{r-j+1}, x_{r-j+1}, \dots, x_{r + 1} \geq 0$ at all times during the walk.
Similarly it is the case that $\bD^{(r)} = \bD^{(r)}(0)$.
In Theorem~\ref{thm:CFL} we saw that both $\bA^{(r)}$ and $\bD^{(r)}$ are an unambiguous CFLs which in turn guaranteed the sequences under consideration earlier were holonomic and their generating functions were algebraic.
It turns out neither $\bA^{(r)}(j)$ nor $\bD^{(r)}(j)$ is a CFL for $j > 0$.

\begin{proposition}
For any $0 < j \leq r$ the languages $\bA^{(r)}(j)$ and $\bD^{(r)}(j)$ are not a CFLs.
\end{proposition}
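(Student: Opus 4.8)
The plan is to exploit the closure of context-free languages under intersection with regular languages (the Ginsburg--Greibach result already invoked in Theorem~\ref{thm:CFL}) together with the standard fact that the \emph{crossing-dependency} language $\{a^n b^m c^n d^m : n,m \geq 0\}$ is not context free (a routine consequence of the pumping lemma). If I can exhibit a regular language $R$ for which $\bA^{(r)}(j) \cap R$ (respectively $\bD^{(r)}(j) \cap R$) is, up to a length-preserving recoding, exactly this crossing language, then since a CFL intersected with a regular language stays context free, it follows that $\bA^{(r)}(j)$ (respectively $\bD^{(r)}(j)$) is not a CFL. The guiding heuristic is that ending on an intersection of two or more coordinate hyperplanes imposes two independent balancing conditions, which a single pushdown store cannot maintain at once.

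For $\bA^{(r)}(j)$ I would first fix four distinct letters $A,B,C,D \in \calS_r$ whose last two coordinates are $(+1,+1)$, $(+1,-1)$, $(-1,-1)$, $(-1,+1)$ respectively, and whose remaining constrained coordinates $x_{r-j+1},\dots,x_{r-1}$ copy the sign pattern of $x_r$ (the unconstrained coordinates being set arbitrarily). Intersecting with $R = A^*B^*C^*D^*$, a word $A^aB^bC^cD^d$ ends with $x_{r-j+1}=\dots=x_{r+1}=0$ exactly when the conditions from $x_r$ and $x_{r+1}$ hold, namely $a+b-c-d=0$ and $a-b-c+d=0$, the remaining coordinates merely repeating the first condition. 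Solving gives $a=c$ and $b=d$, so $\bA^{(r)}(j)\cap R = \{A^aB^bC^aD^b : a,b\geq 0\}$, the crossing language, which is not a CFL.

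The case of $\bD^{(r)}(j)$ is where the real difficulty lies and is what I expect to be the main obstacle. Because every constrained coordinate must stay nonnegative, the first step of any walk is forced to be $+1$ in each constrained coordinate and the last step $-1$; with single-letter blocks this makes each coordinate a unimodal up-then-down excursion, so the only dependencies realizable in block form are \emph{nested}, and nested dependencies are context free. To break this I would pass to compound letters of length two, encoding each logical symbol as a pair of steps that is \emph{neutral} in one of $x_r,x_{r+1}$ by sending that coordinate up and immediately back down within the pair while advancing the other coordinate by two. Concretely the four compound symbols $\mathsf A,\mathsf B,\mathsf C,\mathsf D$ would have net effect $(+2,0)$, $(0,+2)$, $(-2,0)$, $(0,-2)$ on $(x_r,x_{r+1})$, each arranged so that the locally varying coordinate never dips below its starting height.

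Taking $R = \mathsf A^*\mathsf B^*\mathsf C^*\mathsf D^*$ and verifying the excursion step by step, every word $\mathsf A^n\mathsf B^m\mathsf C^n\mathsf D^m$ traces a walk that stays in the quadrant $x_r,x_{r+1}\geq 0$ and returns to the origin, while the return condition forces $\#\mathsf A=\#\mathsf C$ and $\#\mathsf B=\#\mathsf D$; the extra constrained coordinates $x_{r-j+1},\dots,x_{r-1}$ would be made to rise and fall by one inside each compound letter, so they return to $0$ after every letter and contribute no new condition. Hence $\bD^{(r)}(j)\cap R = \{\mathsf A^n\mathsf B^m\mathsf C^n\mathsf D^m : n,m\geq 0\}$. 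Since the compound letters all have length two they form a uniquely decodable code, so applying the inverse of the block-coding homomorphism (under which context-free languages are closed), or equally a direct pumping argument, recovers $\{a^nb^mc^nd^m\}$; thus $\bD^{(r)}(j)\cap R$ is not context free and neither is $\bD^{(r)}(j)$. The crux throughout is the passage to compound letters, which is precisely what lets the construction keep the walk in the halfspace while still realizing a crossing rather than a nested dependency.
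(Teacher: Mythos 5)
Your proof is correct, but it takes a genuinely different route from the paper's. The paper argues directly with the pumping lemma: it exhibits the single witness $[(1,1)^p,(1,-1)^p,(1,1)^p,(-1,1)^p,(-1,-1)^{2p}] \in \bD^{(1)}(1) \subseteq \bA^{(1)}(1)$, observes that every contiguous window of length at most $p$ is constant in some coordinate, so pumping necessarily unbalances that coordinate's final sum; one word thus disposes of both languages at once, and the general case follows by placing this example in coordinates $r$, $r+1$ and padding the rest. You instead reduce to the canonical crossing-dependency language $\{a^n b^m c^n d^m : n,m \geq 0\}$ via closure properties: intersect with a regular language of sorted blocks, and, for the halfspace case, pull back along a block-coding homomorphism. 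Your computation for $\bA^{(r)}(j)$ is right (the two conditions $a+b-c-d=0$ and $a-b-c+d=0$ force $a=c$, $b=d$, and the extra constrained coordinates only repeat the first condition), and your compound-letter device --- making each symbol neutral in one tracked coordinate via an up-then-down oscillation that never dips below its start --- correctly defeats the obstruction you identified, namely that single-letter blocks in the quadrant can only realize nested dependencies; the bookkeeping closes because the end conditions $n'=n$, $m'=m$ subsume the prefix-nonnegativity constraints $n' \leq n$, $m' \leq m$, and a uniform-length code with distinct codewords is uniquely decodable, so the inverse homomorphism recovers exactly the crossing language. What the paper's approach buys is brevity and self-containedness: one pumping argument, no closure machinery, no auxiliary non-context-free language. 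What yours buys is an explanation of the mechanism --- these languages contain, as a regular slice, precisely a crossed-dependency core, reflecting two independent balancing constraints that a single stack cannot maintain --- together with a modular template that would adapt to other step sets, at the cost of invoking closure under intersection with regular languages and under inverse homomorphisms, plus the (itself pumping-based) non-context-freeness of $\{a^n b^m c^n d^m\}$.
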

\begin{proof}
It will suffice to show that neither $\bA^{(1)}(1)$ nor $\bD^{(1)}(1)$ is a CFL.
We will use the pumping lemma and the choice of string pumped will readily generalize to $\bA^{(r)}(j)$ and $\bD^{(r)}(j)$ for $r, j > 1$.
Assume that either $\bA^{(1)}(1)$ or $\bD^{(1)}(1)$  is a CFL with pumping length $p$.
There is the walk
\[[(1,1)^p, (1,-1)^p, (1,1)^p, (-1,1)^p, (-1,-1)^{2p}]\ \in \bD^{(1)}(1) \subseteq \bA^{(1)}(1)\]
which cannot be pumped.
Indeed any consecutive substring of length at most $p$ will contain one coordinate with all entries equal.
Hence, after pumping this substring the walk will not end at the origin.
For other values of $r$ and $j$ the above example can be used to choose the coordinates in positions $r$ and $r+1$ while the remaining coordinates can be filled as needed to make a valid walk.
\end{proof}

Letting $a^{(r)}_n(j)$ denote the number of walks of length $2n$ in $\bA^{(r)}(j)$ we can see that
\[
    a^{(r)}_n(j) = 2^{2n(r-j)} \binom{2n}{n}^{j+1}
\]
which satisfies that recurrence
\[
    n^{j+1} a^{(r)}_n(j) = 2^{2r-j+1}(2n-1)^{j+1}a^{(r)}_{n-1}(j)
\]
so the sequence turns out to still be holomonic.
A similar computation can be performed for $\bD^{(r)}(j)$ with Catalan numbers in place of the central binomial coefficients.
We can then look at avoiding patterns like backtracking or consecutive steps.
For these languages we do not have a guarantee that the corresponding sequences are holomonic.

\begin{question}
What can be said about enumeration for the analogous languages ending on an intersection of hyperplanes?
\end{question}

\subsection{A Motzkin generalization and more patterns}
A natural extension would be to look at other sets of steps.
We outline one possibility related to Motzkin numbers.
The Motzkin numbers enumerate walks in $\mathbb{Z}$ with steps from $\{-1, 0, +1\}$ that end at the origin and are restricted to the nonnegative integers.
Continuing in the spirit of what we have done, one could consider walks in $\mathbb{Z}^{r+1}$ with step set $\{-1,0,+1\}^{r+1}$ that end with $x_{r+1} = 0$ along with combining half-space restrictions as well as pattern avoidance.
Bu~\cite{Bu} has used dynamic programming to attack enumeration of restricted Motzkin paths in a manner similar to the previously mentioned work on Dyck paths by Ekhad and Zeilberger~\cite{EZ}.
As previously mentioned these works look at other patterns (e.g. longer runs $[v,v,\dots, v]$ of consecutive steps).
One could look at the pattern of a longer run with either our current step set or another step set.
Theorem~\ref{thm:CFL} can be easily adapted to give an algebraic generating function for other patterns since the language of walks avoiding a pattern is regular.

\begin{question}
What can be said about enumeration for the analogous languages using the Motzkin-like alphabet $\{-1,0,+1\}^{r+1}$?
\end{question}

\subsection{Other related work}
We finish by mentioning some other related work.
The $r=1$ case of Theorem~\ref{thm:B} agrees with A082298 in the OEIS~\cite{OEIS} and provides a proof of a conjecture observed by David Scambler.
For $r=1$ Theorem~\ref{thm:E} and Theorem~\ref{thm:F} correspond to A086871 and A082298 respectively in the OEIS~\cite{OEIS}.
Furthermore, when $r=1$ Theorem~\ref{thm:E} is twice the formula in \cite[Theorem 2.2(b)]{Coker} which enumerates lattice walks from $(0,0)$ to $(n,n)$ using steps $\{(0,j), (j,0) : j \geq 0\}$ which never go above the line $x=y$.
These walks are also enumerated by Woan in~\cite{Woan} and are A059231 in the OEIS~\cite{OEIS}.

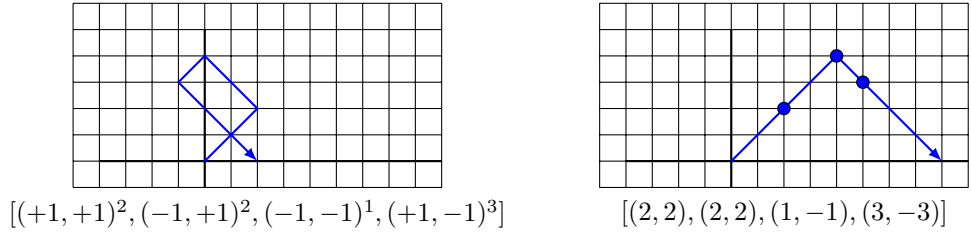
\begin{figure}
    \centering
\begin{tikzpicture}[scale=0.35]

\begin{scope}[shift={(-17.5,0)}]
\draw[step=1] (-5,-1) grid (9,6);
\draw[thick] (-4,0)--(9,0);
\draw[thick] (0,-1)--(0,5);
\draw[thick, blue, -{latex}] (0,0)--(2,2)--(0,4)--(-1,3)--(2,0);
\node (x) at (2,-2) {$[(+1,+1)^2, (-1,+1)^2, (-1,-1)^1, (+1,-1)^3]$};
\end{scope}

\begin{scope}[shift={(2.5,0)}]
\draw[step=1] (-5,-1) grid (9,6);
\draw[thick] (-4,0)--(9,0);
\draw[thick] (0,-1)--(0,5);
\draw[thick, blue, -{latex}] (0,0)--(4,4)--(8,0);
\node[draw, circle, fill=blue!, scale=0.5] (a) at (2,2) {};
\node[draw, circle, fill=blue!, scale=0.5] (b) at (4,4) {};
\node[draw, circle, fill=blue!, scale=0.5] (c) at (5,3) {};
\node (x) at (2,-2) {$[(2,2),(2,2),(1,-1),(3,-3)]$};
\end{scope}
\end{tikzpicture}
\label{fig:bijection}
\caption{A walk in $\bE'$ and the path in $\bE''$ it maps to by the bijection $\Phi$.}
\end{figure}

Let $\bE' = \bE^{(1)}((+1,+1)) \subseteq \bE^{(1)}$ be the sublanguage of walks starting with the step $(+1,+1)$.
Also, let $\bE''$ be the language of walks from $(0,0)$ to $(2n,0)$ for any $n \geq 0$ using steps from $\{(j,j), (j,-j) : j \geq 0\}$ which never go below the $x$-axis.
The walks in $\bE''$ are equinumerous with the walks enumerated by Cocker and by Woan via exchanging $(j,0) \leftrightarrow (j,j)$ and $(0,j) \leftrightarrow (j,-j)$.
Since our walks have a symmetry with orbit size $2$ for all nonempty walks by the action of reflecting over the $y$-axis, there is a bijection between $\bE'$ and $\bE''$.
Let us now define a map $\Phi: \bE' \to \bE''$.
For $w\in \bE'$ start by considering the decomposition into maximal runs of consecutive steps
\[w= [v_1^{j_1}, v_2^{j_2}, \dots v_{\ell}^{j_{\ell}}] \in \bE'\]
for $v_i \in \calS_1$ with $v_i \neq v_{i+1}$ which is unique and well-defined.
We then set
\[\Phi(w) = [u_1, u_2, \dots, u_{\ell}] \in \bE''\]
where
\[u_i = \begin{cases}
(j_i, j_i) & \text{if $v_i$ has positive $y$-coordinate;}\\
(j_i, -j_i) & \text{if $v_i$ has negative $y$-coordinate;}\\
\end{cases}
\]
for each $1 \leq i \leq \ell$.

\begin{proposition}
The map $\Phi: \bE' \to \bE''$ is a bijection.
\end{proposition}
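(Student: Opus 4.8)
The plan is to prove the proposition by exhibiting an explicit inverse $\Psi : \bE'' \to \bE'$ and verifying that $\Psi$ and $\Phi$ are mutually inverse; the crux is a short case analysis showing that, under backtracking-avoidance, the run decomposition of a walk in $\bE'$ is completely recoverable from one-dimensional sign-and-length data. Throughout I write $\sigma(v) \in \{+,-\}$ for the sign of the $x_2$-coordinate of $v \in \calS_1 = \{NE, NW, SE, SW\}$, so that $\sigma(NE) = \sigma(NW) = +$ and $\sigma(SE) = \sigma(SW) = -$.

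First I would record the combinatorial heart of the argument. In the run decomposition $w = [v_1^{j_1}, \dots, v_{\ell}^{j_{\ell}}]$, maximality of the runs forces $v_{i+1} \neq v_i$ and avoidance of backtracking forces $v_{i+1} \neq -v_i$, so $v_{i+1} \notin \{v_i, -v_i\}$. Since $\{v_i, -v_i\}$ is one of the two antipodal pairs $\{NE, SW\}$ or $\{NW, SE\}$, the complementary pair contains exactly one vector of each $x_2$-sign. Hence for each $v_i$ and each target sign $s \in \{+,-\}$ there is a \emph{unique} admissible successor, and consequently the whole sequence $v_1, \dots, v_{\ell}$ is determined by the starting vector $v_1 = NE$ together with the sign sequence $\sigma(v_1), \dots, \sigma(v_{\ell})$. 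This unique-successor observation is the step I expect to be the main (if brief) obstacle: it is exactly what lets the planar walk be encoded by its vertical signs and run lengths without loss of information, and it is where backtracking-avoidance is used essentially.

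Next I would check that $\Phi$ lands in $\bE''$. Each $u_i = (j_i, \sigma(v_i)\, j_i)$ is a legal step of positive length $j_i \geq 1$, and since every unit step of run $i$ changes $x_2$ by $\sigma(v_i)$, the height of $\Phi(w)$ after $u_1, \dots, u_i$ equals $\sum_{k \leq i} \sigma(v_k) j_k$, which is precisely the height of $w$ at the end of its $i$th run. As both $w$ and $\Phi(w)$ are piecewise monotone in the vertical coordinate with identical values at these breakpoints, they attain the same minimum height; thus $w$ stays weakly above the $x$-axis if and only if $\Phi(w)$ does, and both terminate at height $0$. The terminal $x$-coordinate of $\Phi(w)$ is $\sum_i j_i = 2n$, so $\Phi(w)$ is indeed a walk from $(0,0)$ to $(2n,0)$ in $\bE''$.

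Finally I would define $\Psi$ and close the loop. Given $[u_1, \dots, u_{\ell}] \in \bE''$, the non-negativity constraint forces the first step (of positive length) to be an up-diagonal, so $s_1 = +$; reading off the signs $s_i$ and lengths $j_i$ from the $u_i$ and setting $v_1 = NE$, the unique-successor rule reconstructs $v_2, \dots, v_{\ell}$, and I output $\Psi([u_i]) = [v_1^{j_1}, \dots, v_{\ell}^{j_{\ell}}]$. By construction each $v_{i+1} \notin \{v_i, -v_i\}$, so $\Psi([u_i])$ avoids backtracking and its runs are genuinely maximal, while the same height-matching argument shows it stays weakly above the $x$-axis and ends with $x_2 = 0$; hence $\Psi([u_i]) \in \bE'$. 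That $\Psi \circ \Phi$ and $\Phi \circ \Psi$ are identities is then immediate from the uniqueness in the successor rule, since both maps preserve the length sequence $(j_i)$, the sign sequence $(\sigma(v_i)) = (s_i)$, and the common starting vector $v_1 = NE$. The empty walk maps to the empty walk, which completes the verification that $\Phi$ is a bijection.
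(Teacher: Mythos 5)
Your proof is correct, but it takes a genuinely different route from the paper. The paper proves only that $\Phi$ is \emph{injective} (by induction on the run index, using the same unique-successor observation you isolate: maximality rules out $v_{i+1} = v_i$, backtracking-avoidance rules out $v_{i+1} = -v_i$, so the sign of the $x_2$-coordinate pins down $v_{i+1}$ given $v_i$), and then concludes bijectivity from the previously established fact that $\bE'$ and $\bE''$ are equinumerous length-by-length --- a fact the paper gets from Theorem~\ref{thm:E} at $r=1$, the enumerations of Coker and Woan, and the reflection symmetry halving $e^{(1)}_n$. You instead construct an explicit two-sided inverse $\Psi$, reconstructing the vector sequence from $v_1 = NE$ and the sign sequence via the unique-successor rule, and you verify well-definedness in both directions (in particular the breakpoint height-matching argument showing $w$ stays weakly above the axis iff $\Phi(w)$ does, which the paper asserts without detail). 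What this buys you is self-containment: your argument nowhere uses the counting results, and in fact it \emph{reproves} the equinumerosity of $\bE'$ and $\bE''$ bijectively, whereas the paper's proof consumes that equality as an input. The trade-off is length --- given the enumeration already in hand, the paper's injectivity-plus-counting argument is shorter. One small reading choice worth flagging: the paper's step set for $\bE''$ is written as $\{(j,j),(j,-j) : j \geq 0\}$, which taken literally admits the degenerate step $(0,0)$; your assumption $j_i \geq 1$ (forced anyway on the image of $\Phi$, since run lengths are positive) is clearly the intended interpretation and is needed for $\Psi$ to be well-defined, but you might note it explicitly when defining $\Psi$ on all of $\bE''$.
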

\begin{proof}
If $w \in \bE'$ has $2n$ steps, then $\Phi(w) \in \bE''$ is a path from $(0,0)$ to $(2n,0)$.
It is enough to show $\Phi$ is injective since we know the number of walks in $\bE'$ with $2n$ steps is the same as the number of paths in $\bE''$ from $(0,0)$ to $(2n,0)$.
Consider $w, w' \in \bE'$ with 
\begin{align*}
    w &= [v_1^{j_1}, v_2^{j_2}, \dots v_{\ell}^{j_{\ell}}]\\
    w' &= [u_1^{ j'_1}, u_2^{j'_2}, \dots u_{\ell'}^{ j_{\ell'}'}]
\end{align*}
as decompositions into maximal runs.
Assume the $\Phi(w) = \Phi(w')$, then immediately we have $\ell = \ell'$ because the paths $\Phi(w)$ and $\Phi(u)$ must have the same number of steps.
Next it must be the case that $v_1 = u_1 = (+1, +1)$ since $w, w' \in \bE'$.
The first steps of $\Phi(w)$ and $\Phi(w')$ are $(j_1, j_1)$ and  $(j'_1, j'_1)$ so we have $j_1 = j'_1$.
This establishes the base case for induction.
Assume that $v_{i-1} = u_{i-1}$ and $j_{i-1} = j'_{i-1}$ for some $1 < i < \ell$.
We will show this implies that $v_i = u_i$ and $j_i = j'_i$.
The $i$th steps of $\Phi(w)$ and $\Phi(w')$ are $(j_i, \pm j_i)$ and $(j'_i, \pm j'_i)$.
This means that $j_i = j'_i$ and the $y$-coordinate of $v_i$ and $u_i$ match.
Since the walks avoid backtracking and we have decomposed them into maximal runs there is only one choice for the $x$-coordinate once the $y$-coordinate is fixed.
As $u_{i-1} = v_{i-1}$ we have that $u_i = v_i$.
The proposition then follows by induction.
\end{proof}

\begin{example}
Consider $w \in \bB'$ where
\begin{align*}
    w &= [(+1,+1), (+1,+1), (-1,+1), (-1,+1), (-1,-1), (+1,-1), (+1,-1), (+1,-1)]\\
    &= [(+1,+1)^2, (-1,+1)^2, (-1,-1)^1, (+1,-1)^3]
\end{align*}
then
\[\Phi(w) = [(2,2),(2,2),(1,-1),(3,-3)]\]
and these walks are shown in Figure~\ref{fig:bijection}.
\end{example}

\bibliographystyle{alphaurl}
\bibliography{refs}

\appendix
\section*{Appendix: Proofs with Maple code}
Here in the~Appendix we show how the functions \texttt{sumrecursion} and \texttt{sumtohyper} from the \texttt{sumtools} package in Maple~\cite{maple} can be used to compute the recurrences and hypergeometric evaluations in this paper.
We first illustrate how to compute the recurrence relations in Theorem~\ref{thm:B}, Theorem~\ref{thm:C}, Theorem~\ref{thm:E}, and Theorem~\ref{thm:F} using Maple to execute to Zeilberger's algorithm~\cite{Z90,Z91}.
Running the following in Maple
\begin{lstlisting}
with(sumtools);

sumrecursion(2*(2^r - 1)^(2*k - 2)*2^(r*(2*n - 2*k + 1))*binomial(n - 1, k - 1)*((2^r - 1)*binomial(n - 1, k - 1) + 2^r*binomial(n - 1, k - 2)), k, b(n));

sumrecursion(2*(2^r - 1)^(2*n - 2*k)*2^(r*(2*k - 1))*binomial(n - 1, k - 1)*(2^r*binomial(n - 1, k - 1) + (2^r - 1)*binomial(n - 1, k - 2)), k, c(n));

sumrecursion((1/n)*(2^r-1)^(2*k-1)*2^(r*(2*n-2*k+1))*binomial(n,k)*binomial(n,k-1), k, e(n));

sumrecursion((1/n)*(2^r-1)^(2*n-2*k)*2^(2*r*k)*binomial(n,k)*binomial(n,k-1), k, f(n));
\end{lstlisting}
will return is the relevant parts of Theorem~\ref{thm:B}, Theorem~\ref{thm:C}, Theorem~\ref{thm:E}, and Theorem~\ref{thm:F} giving the recurrences.
Also running the following in Maple
\begin{lstlisting}
with(sumtools);

sumtohyper(2*(2^r - 1)^(2*k - 2)*2^(r*(2*n - 2*k + 1))*binomial(n - 1, k - 1)*((2^r - 1)*binomial(n - 1, k - 1) + 2^r*binomial(n - 1, k - 2)), k);

sumtohyper(2*(2^r - 1)^(2*n - 2*k)*2^(r*(2*k - 1))*binomial(n - 1, k - 1)*(2^r*binomial(n - 1, k - 1) + (2^r - 1)*binomial(n - 1, k - 2)), k);

sumtohyper((2^r - 1)^(2*k - 1)*2^(r*(2*n - 2*k + 1))*binomial(n, k)*binomial(n, k - 1)/n, k);

sumtohyper((2^r - 1)^(2*n - 2*k)*2^(2*r*k)*binomial(n, k)*binomial(n, k - 1)/n, k);
\end{lstlisting}
verifies the hypergeometric evaluations.

\end{document}